\DeclarePairedDelimiter{\ceil}{\lceil}{\rceil}
\DeclarePairedDelimiter\floor{\lfloor}{\rfloor}
\DeclareMathOperator{\crg}{cr}
\DeclareMathOperator{\lc}{lcr}
\DeclareMathOperator{\conv}{conv}
\DeclareMathOperator{\gen}{gen}
\DeclareMathOperator{\nonedge}{\partial _{non-edge}}
\DeclareMathOperator{\bd}{\partial\conv}
\newcolumntype{C}{ >{\centering\arraybackslash} m{4cm} }
\newcolumntype{D}{ >{\centering\arraybackslash} m{1cm} }
\newcolumntype{E}{ >{\centering\arraybackslash} m{2cm} }
\newtheorem{thm}{Theorem}
\newtheorem{prop}[thm]{Proposition}
\newtheorem{lem}[thm]{Lemma}
\newtheorem{cor}[thm]{Corollary}
\newtheorem{conj}[thm]{Conjecture}
\def\crg{\mathop{\rm cr}}
\def\PRRany{\rm 1}
\def\PNNd{\rm 2}
\def\PNdN{\rm 3}
\def\PNa{\rm 4}
\def\PanyLa{\rm 5}
\def\PaRRa{\rm 6}
\def\PN{\rm 7}
\def\PaLL{\rm 8}
\def\PLLa{\rm 9}
\def\PanyL{\rm 10}
\def\PV{\rm 11}
\title{Book crossing numbers of the complete graph\\ and small local convex crossing numbers}
\author
{Bernardo M. \'Abrego\footnote{California State University, Northridge, [bernardo.abrego, silvia.fernandez,julia.kinzel]@csun.edu.}, 
Julia Dandurand$^{*\dag}$, 
Silvia Fern\'andez-Merchant$^*$\thanks{Supported by the NSF grant DMS-1400653.}, \\
Evgeniya Lagoda\footnote{Berlin Mathematical School, lagoda@math.tu-berlin.de}, 
Yakov Sapozhnikov\footnote{University of Idaho, yakovs@uidaho.edu.} $^\dag$}
\begin{document}
\maketitle
\begin{abstract}
A \emph{$ k $-page book drawing} of a graph $ G $ is a drawing of $ G $ on $ k $ halfplanes with a line $ l $ as a common boundary such that the vertices are located on $ l $ and the edges cannot cross $ l $. The \emph{$ k $-page book crossing number} of the graph $ G $, denoted by $ \nu_k(G) $, is the minimum number of edge-crossings over all $ k $-page book drawings of $ G $. This paper improves previous results on $ k $-page book crossing numbers of the complete graph $ K_n $. We determine $ \nu_k(K_n) $ whenever $ 2 < n/k \leq 3 $ and improve the lower bounds on $ \nu_k(K_n) $ for all $ k\geq 14 $. Our proofs rely on bounding the number of edges in convex graphs with small local crossing numbers. In particular, we determine the maximum number of edges that a convex graph with at most $ \ell $ crossings per edge can have for any $ \ell\leq 4 $.
\end{abstract}
\section{Introduction}
A \emph{$ k $-page book} is an object in the space formed by $ k $ halfplanes with common boundary. The common boundary is a line called the \emph{spine} of the book and the halfplanes are called the \emph{pages} of the book. A \emph{$ k $-page book drawing} of a graph $ G $ is a drawing of $ G $ in a $ k $-page book so that the vertices lay on the spine of the book and the edges do not cross the spine. The \emph{$ k $-page book crossing number} of the graph $ G $, denoted by $ \nu_k(G) $, is the minimum number of edge-crossings over all $ k $-page book drawings of $ G $. Book crossing numbers have been studied in relation to their applications in VLSI designs \cite{CLR87, L83}. We are concerned with the $ k $-page book crossing number of the complete graph $ K_n $. 

In 1964, Bla\v{z}ek and Koman \cite{BC64} described $ k $-page book drawings of $ K_n $ with few crossings and proposed the  problem of determining $ \nu_k(K_n) $. They only described their construction in detail for $ k=2 $, explicitly pointing out the exact number of crossings in their construction for $ k=2 $ and $ 3 $, and indicated that their construction could be generalized to larger values of $ k $, implicitly conjecturing that their construction achieves $ \nu_k(K_n) $. In 1994, Damiani, D'Antona, and Salemi \cite{DaDASa} described constructions in detail using adjacency matrices, but did not explicitly compute their exact crossing numbers. Two years later, Shahrokhi, S\'{y}kora, Sz\'{e}kely, and Vrt'o \cite{SSSV96} provided a geometric description of $ k $-page book drawings of $ K_n $ and bounded their number of crossings, showing that
\[
\nu_k(K_n)\leq \frac{2}{k^2}\left(1-\frac{1}{2k}\right)\binom{n}{4}+\frac{n^3}{2k}.
\]
In 2013, De Klerk, Pasechnik, and Salazar  (see Proposition 5.1 in \cite{DPS13}) gave another construction and computed its exact number of crossings using the geometric approach in \cite{SSSV96}. It can be expressed as 
\begin{equation}\label{eq:zk1}
Z_k(n):=(n \bmod k)\cdot F\left( \left\lfloor {\dfrac{n}{k}}\right\rfloor+1,n  \right)+(k-(n \bmod k))\cdot F\left( \left\lfloor {\dfrac{n}{k}}\right\rfloor,n  \right),
\end{equation}
where
\begin{equation}\label{eq:zk2}
F(r,n):=\frac{r}{24}(r^2-3r+2)(2n-3-r).
\end{equation}
Then $ \nu_k (K_n)\leq Z_k(n). $
All the constructions in \cite{DaDASa}, \cite{SSSV96}, and \cite{DPS13} generalize the original Bla\v{z}ek-Koman construction. They coincide when $ k $ divides $ n $ but are slightly different otherwise. They are widely believed to be asymptotically correct. In fact, the constructions in \cite{DaDASa} and \cite{DPS13} have the same number of crossings (which is in some cases smaller than that in \cite{SSSV96}), giving rise to the following conjecture on the $ k $-page book crossing number of $ K_n $ (as presented in \cite{DPS13}).
\begin{conj}\label{conj:k-page}
For any positive integers $ k $ and $ n $, $$ \nu_k (K_n)=Z_k(n).$$
\end{conj}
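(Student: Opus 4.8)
The plan is to take the upper bound $\nu_k(K_n)\le Z_k(n)$ as already in hand — it is witnessed by the constructions of \cite{DaDASa} and \cite{DPS13} recorded in \eqref{eq:zk1}--\eqref{eq:zk2} — and to concentrate entirely on the matching lower bound $\nu_k(K_n)\ge Z_k(n)$. The first move is a reformulation. Since $K_n$ is vertex-transitive, the order of the vertices on the spine is irrelevant, so a $k$-page book drawing of $K_n$ is just a partition of $E(K_n)$ into $k$ pages $E_1,\dots,E_k$; placing the vertices on a circle, each page becomes a \emph{convex graph} $G_p=(V,E_p)$ whose convex crossing number $\crg(G_p)$ is the number of interleaving pairs of its chords, and the drawing has exactly $\sum_{p=1}^{k}\crg(G_p)$ crossings. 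Thus $\nu_k(K_n)=\min\sum_p\crg(G_p)$ over edge-partitions of $K_n$ into $k$ convex graphs on the same $n$ points; equivalently, among the $\binom n4$ crossing pairs of the convex $K_n$ one wants to color as many as possible \emph{bichromatically}, so that $\nu_k(K_n)=\binom n4-\max\sum_{p<q}b_{pq}$, where $b_{pq}$ counts crossing pairs with one chord in page $p$ and the other in page $q$.

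The engine of the lower bound is the extremal estimate highlighted in the abstract: a convex graph on $n$ vertices whose local convex crossing number is at most $\ell$ has at most $e_\ell(n)$ edges, where $e_\ell(n)$ is pinned down exactly for $\ell\le4$ (with $e_0(n)=2n-3$, the outerplanar bound) and is of the form $c_\ell\,n+O(1)$ in general. I would feed this into a vertex-deletion recursion. Fix an optimal $k$-page drawing of $K_n$. Deleting a vertex $v$ leaves a $k$-page drawing of $K_{n-1}$; since each crossing uses four vertices and hence survives in exactly $n-4$ of the $n$ vertex-deleted subdrawings, we get the identity $(n-4)\,\nu_k(K_n)=\sum_v\bigl(\text{crossings of the restricted drawing of }K_{n-1}\text{ at }v\bigr)\ge n\,\nu_k(K_{n-1})$. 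Because $\sum_p|E_p|=\binom n2$, the pages average $\tfrac{n-1}{2}\cdot\tfrac nk$ edges, and in an optimal drawing they cannot be far from balanced (a page with many extra edges carries many crossings); comparing this count against $e_\ell(n)$ forces most pages — or unions of a few pages — to exceed the edge threshold for small $\ell$ and hence to contain an edge that is crossed many times. Charging those heavily-crossed edges, or deleting and re-accounting for them, is what upgrades the raw recursion to the exact value $Z_k(n)$. The difference $Z_k(n)-Z_k(n-1)$ depends on $n\bmod k$ through the two branches $F(\lfloor n/k\rfloor,n)$ and $F(\lfloor n/k\rfloor+1,n)$ of \eqref{eq:zk1}, so the induction naturally splits into cases by that residue.

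I would carry this out first in the range $2<n/k\le3$, where the pages hold at most about $\tfrac32 n$ edges — below, or only slightly above, the outerplanar value $2n-3$ — so that the local convex crossing numbers that arise stay within the band $\ell\le4$ for which $e_\ell(n)$ is known exactly; there the surplus edges are located precisely, the recursion becomes tight, and one obtains $\nu_k(K_n)=Z_k(n)$ in that window. Feeding these exact values back through $(n-4)\nu_k(K_n)\ge n\,\nu_k(K_{n-1})$ then propagates equality as far upward in $n$ as the tightness of the recursion permits.

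\textbf{The main obstacle} is that this last recursion is not sharp enough to recover the lower-order ($n^3$ and smaller) terms of $Z_k(n)$: $Z_k$ grows a shade faster than the pure $\binom n4$-scaling that the identity transports, so propagation from any fixed base undershoots $Z_k(n)$ by a polynomial amount. Closing the gap would require either the full extremal function $e_\ell(n)$ for \emph{all} $\ell$ — a hard extremal problem about convex graphs, since it must control pages whose edges are crossed $\Theta(n^2/k^2)$ times on average — or a genuinely global argument (a flag-algebra / semidefinite computation, or a counting identity finer than single-vertex deletion). Absent such an input, the realistic outcomes are exactly those announced in the abstract: the precise value of $\nu_k(K_n)$ when $2<n/k\le3$, together with improved — though not yet tight — lower bounds for every $k\ge14$, leaving the full equality $\nu_k(K_n)=Z_k(n)$ a conjecture.
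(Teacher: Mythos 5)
The statement you were asked to prove is a conjecture: the paper does not prove it either, and your closing paragraph correctly diagnoses the obstruction, so there is nothing to fault at the level of the full statement. What can be compared is your plan for the partial results, and there your route is essentially the paper's. The engine is identical: bound the crossings of each page $H$ from below by its edge excess over the extremal functions $e_\ell(n)$ --- the paper's Theorem \ref{theorem:CrossingsMaxEdges} states this as $\crg(H)\ge m\,e(H)-\sum_{\ell=0}^{m-1}e_\ell(n)$, proved by repeatedly deleting an edge crossed at least $m$ times, which is the ``charging'' you gesture at --- then sum over the $k$ pages using $\sum_i e(H_i)=\binom n2-n$, and observe that for $2<n/k\le3$ the $m=1$ instance already meets $Z_k(n)$. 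Note that no balancedness of the pages is needed: the bound is linear in the page sizes, so it sums cleanly regardless of how the edges are distributed. The vertex-deletion identity $(n-4)\nu_k(K_n)\ge n\,\nu_k(K_{n-1})$ is used in the paper only to transport the asymptotic constant to large $n$, exactly as you predict it must be.

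One concrete error in your setup: you take $e_0(n)=2n-3$, the outerplanar maximum \emph{including} the $n$ polygon sides. The paper works in $D_n$ (sides removed), where $e_0(n)=n-3$; since a side can never participate in a crossing, the sides must be discarded before the extremal count is applied. With your convention the $m=1$ bound comes out to $\binom n2-k(2n-3)=\tfrac12(n-3)(n-2k)-n(k-1)$, which undershoots $Z_k(n)$ by $n(k-1)$ and therefore fails to give the exact value in the window $2<n/k\le3$ --- the one case that can be settled. The fix is only a normalization, but it is precisely the normalization on which the tightness of Theorem \ref{theorem:range2to3} depends.
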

\'Abrego et al. \cite{AAFRS2} proved  Conjecture \ref{conj:k-page} for $ k=2 $, which can be rewritten as 

\[
\nu_2 (K_n)=Z_2(n)=\frac{1}{4}  \left\lfloor\frac{\mathstrut n}{\mathstrut 2}\right\rfloor  \left\lfloor\frac{n-1}{2}\right\rfloor  \left\lfloor\frac{n-2}{2}\right\rfloor  \left\lfloor\frac{n-3}{2}\right\rfloor.
\] 

The only other previously known exact values of $ \nu_k(K_n) $ are $ \nu_k (K_n)=0 $ for $ k\geq\ceil {n/2} $, as $ Z_k(n)=0 $ in this case \cite{DaDASa,DPS13,SSSV96}, and the remaining unshaded cells in Table \ref{table:known_values} (for which the conjecture holds, see \cite{DPS13}). In Section \ref{sect:exact}, we prove the conjecture for an infinite family of values, namely for any $ k $ and $ n $ such that $ 2< n/k \leq 3$ (shaded cells in Table \ref{table:known_values}, see Theorem \ref{theorem:range2to3}), and give improved lower bounds for $ n/k > 3$ (see Theorem \ref{theorem:rangeall}).

\begin{table}[h]
\begin{center}
\scalebox{0.7}{
\begin{tabular}{ | c | c| c | c| c | c| c| c | c| c| c| c| c| c| c| c| c| c|c|c|c|}
   \hline 
  $ n $ & 5 & 6 & 7 & 8 & 9 & 10 & 11 & 12 & 13& 14 & 15 & 16 & 17 & 18 & 19 & 20 & 21 & 22& $ \cdots  $& $ n $\\
   \hline 
     $\nu_2(K_n) $    & 1 & 3 & 9 & 18 & 36 & 60 & 100 & 150 & 225 & 315 & 441  & 588 & 784 & 1008 & 1296 & 1620 & 2025 & 2475 & & $ Z_2(n) $    \\
   \hline 
     $\nu_3(K_n) $ & 0 & 0 & 2 & 5 & 9 & 20 & 34 & 51 & 83 & 121 & 165 & - & - & - & - & - & - & -   &  & -\\
   \hline 
     $\nu_4(K_n) $ & 0 & 0 &  0 & 0 & 3 & 7 & 12 & 18 & 34 & - & - & - & - & - & - & - & -  & - &  & -\\
   \hline 
     $\nu_5(K_n) $ & 0 & 0 &  0 & 0 & 0 & 0 & 4 & 9 & \cellcolor[gray]{0.8}15 & \cellcolor[gray]{0.8}22 & \cellcolor[gray]{0.8}30& - & - & - & - & - & - & - &  & -\\
       \hline 
          $\nu_6(K_n) $ & 0 & 0 &  0 & 0 & 0 & 0 & 0 & 0 & \cellcolor[gray]{0.8}5 & \cellcolor[gray]{0.8}11 & \cellcolor[gray]{0.8}18 & \cellcolor[gray]{0.8}26 & \cellcolor[gray]{0.8}35 & \cellcolor[gray]{0.8}45 & - & - & -& - &  & -\\
       \hline 
          $\nu_7(K_n) $ & 0 & 0 &  0 & 0 & 0 & 0 & 0 & 0 & 0 & 0 & \cellcolor[gray]{0.8}6 & \cellcolor[gray]{0.8}13 & \cellcolor[gray]{0.8}21 & \cellcolor[gray]{0.8}30 & \cellcolor[gray]{0.8}40 & \cellcolor[gray]{0.8}51 & \cellcolor[gray]{0.8}63& - &  & -\\
       \hline 
       \vdots& & & & & & & & & & & & & & \vdots& & & & & & \\
       \hline 
       $\nu_k(K_n) $ & 0 & 0 &  0 & 0 & 0 & 0 & 0 & $ \cdots $ & 0 & 0 & 0 & 0  &\multicolumn{7}{|c|}{\cellcolor[gray]{0.8}$ \nu_k(K_n)=Z_k(n) $ for $ 2k<n\leq 3k $} & -\\
              \hline 
\end{tabular}
}
\end{center}
\caption{Known values of $ \nu_k(K_n) $. New values in this paper are shaded. (Cells corresponding to still unknown values of $ \nu_k(K_n)$ are marked with ``-''.)}
\label{table:known_values}
\end{table}

In terms of general lower bounds, Shahrokhi et al. \cite{SSSV96} proved a bound for $ \nu_k(G) $ for any graph $ G $. Using this bound for $ K_n $ gives
\[
\nu_k(K_n)\geq \frac{n(n-1)^3}{296k^2}-\frac{27kn}{37}=\frac{3}{37k^2}\binom{n}{4}+O(n^3).
\]
This general bound was improved by De Klerk et al. \cite{DPS13} to
\begin{equation}\label{eq:previous_lower_bound}
\nu_k (K_n) \geq \left\{
     \begin{array}{ll}
       \frac{3}{119}\binom{n}{4}+O(n^3) & \textup{if } k=4,\\
       \frac{2}{(3k-2)^2}\binom{n}{4}& \textup{if } k \textup{ is even, and } n\geq k^2/2+3k-1,\\
       \frac{2}{(3k+1)^2}\binom{n}{4}& \textup{if } k \textup{ is odd, and } n\geq k^2+2k-7/2.
     \end{array}
   \right.
\end{equation}

Using semidefinite programming, De Klerk et al. \cite{DPS13} further improved the lower bound for several values of $ k\leq 20 $. In Section \ref{sect:asympt}, we prove Theorem \ref{theorem:asympt_improvement} that improves the lower bounds in \cite{DPS13} for $ k\geq 14 $  (see Table \ref{table:improvement}), but more importantly, it improves the asymptotic bound (\ref{eq:previous_lower_bound}) for every $ k $. 

\begin{restatable}{thm}{LowerBookBound}\label{theorem:asympt_improvement}
For any integers $ k\geq 3 $ and $ n\geq \floor {111k/20} $,
\begin{multline*}
 \nu_k(K_n)\geq \frac{8000(4107k^2-5416k+1309)}{37(111k-17)(111k-77)(37k-19)(3k-1)}\binom{n}{4}
 =\left(\frac{8000}{12321}\cdot\frac{1}{k^2}+\Theta\left(\frac{1}{k^3}\right)\right)\binom{n}{4}. 
\end{multline*}
\end{restatable}
In contrast, $ Z_k(n) $ was asymptotically estimated in \cite{DPS13}, 
\[
\nu_k (K_n)\leq Z_k(n)=\left(\left(\frac{2}{k^2}\right)\left(1-\frac{1}{2k}\right)\right)\binom{n}{4}+O(n^3).
\]
This improves the ratio of the lower to the upper bound on $ \lim_{n\to \infty}\frac{\nu_k(K_n)}{\binom{n}{4}} $ from approximately $ \frac{1}{9} \approx 0.1111$ to $ \frac{4000}{12321} \approx 0.3246$.
All our results (exact values and asymptotic bounds) heavily rely on a different problem for convex graphs that is interesting on its own right. Before stating this new problem and our results, we describe its connection to book drawings.

\begin{table}[h]
\begin{center}
\scalebox{0.8}{
\begin{tabular}{ |D|C|C|C|C|}
   \hline 
  $ k $ & Lower bound in \cite{DPS13} & New lower bound & Upper bound & \parbox[t]{3.2cm}{Ratio of new lower\\ to upper bound}  \\
    \hline 
     $14 $ & $ 3.2930\times 10^{-3} $ & $ 3.4342\times 10^{-3} $ & $ 9.8396\times 10^{-3} $ & $ 0.3490 $ \\
   \hline 
     $15 $ & $ 2.5870\times 10^{-3} $ & $ 2.9852\times 10^{-3} $ & $ 8.5925\times 10^{-3} $ & $ 0.3474 $ \\
   \hline 
     $16 $ & $ 2.0348\times 10^{-3} $ & $ 2.6193\times 10^{-3} $ & $ 7.5683\times 10^{-3} $ & $ 0.3461 $  \\
   \hline 
     $17 $ & $ 1.6023\times 10^{-3} $ & $ 2.3166\times 10^{-3} $ & $ 6.7168\times 10^{-3} $ & $ 0.3449 $ \\
        \hline 
     $18 $ & $ 1.2562\times 10^{-3} $ & $ 2.0621\times 10^{-3} $ & $ 6.0013\times 10^{-3} $ & $ 0.3436 $ \\
        \hline 
     $19 $ & $ 9.8258\times 10^{-4} $ & $ 1.8490\times 10^{-3} $ & $ 5.3943\times 10^{-3} $ & $ 0.3428 $ \\
        \hline
     $20 $ & $ 7.7482\times 10^{-4} $ & $ 1.6653\times 10^{-3} $ & $ 4.8750\times 10^{-3} $ & $ 0.3416 $ \\
        \hline 
\end{tabular}
}
\end{center}
\caption{ Bound comparison for $ \displaystyle\lim _{n\to \infty}\frac{\nu_k(K_n)}{\binom{n}{4}}. $}
\label{table:improvement}
\end{table}

One way to visualize and study $ k $-page book drawings is by using what we called the \emph{convex model}. In this model, a given $ k $-page book drawing of a graph $ G $ is drawn on the plane as an edge-colored \emph{convex drawing} as follows: The spine is now a circle $ C $, or more generally, a simple convex curve. The vertices of $ G $ are placed on $ C $, typically forming the set of vertices of a convex polygon inscribed in $ C $. The edges are diagonals or sides (straight line segments) of the polygon that are $ k $-colored in such a way that two edges get the same color if and only if they originally were on the same page. Using this model, the problem of determining $ \nu_k(K_n) $ is equivalent to finding the minimum number of monochromatic crossings in a $ k $-edge coloring of a convex drawing of $ K_n $. The subgraph (subdrawing) induced by each of the colors is known as a \emph{convex}  or \emph{outerplanar graph} (drawing). We denote by $ G_n $ the complete convex graph, that is, any convex drawing of $ K_n $. Since we are interested in crossings, it is often convenient to disregard the sides of the underlying polygon as edges. We denote by $ D_n $ the complete convex graph minus all the edges corresponding to the sides of the underlying polygon. Let $e_\ell(n)$ be the maximum number of edges over all convex subgraphs of $ D_n $ in which each edge is crossed at most $ \ell $ times. Such graphs are said to have \emph{local crossing number} at most $ \ell $. For example,  planar graphs have local crossing number 0, the graphs in Figure \ref{fig:constructions_all_e}(b) have local crossing number 1, and those in \ref{fig:constructions_all_e}(e) have local crossing number 4. (Graphs with local crossing number at most $ \ell $ are called \emph{$ \ell $-planar graphs} \cite{R65}. If they are also convex, then they are called \emph{outer $ \ell $-planar graphs} \cite{E86,Kai90,ABB16}.) Local crossing numbers of convex graphs were studied by Kainen \cite{Kai73,Kai90}. The problem of maximizing the number of edges over convex graphs satisfying certain crossing conditions was studied by Brass, K\'arolyi, and Valtr \cite{BKV03}. 
Functions equivalent to $ e_\ell(n) $ for general drawings of graphs in the plane were studied by Pach and T\'oth \cite{PT97}; Pach, Radoi\u{c}i\'c, Tardos, and T\'oth \cite{PRTT06}; and Ackerman \cite{A15}. 

In Section \ref{sect:theoremL}, we prove the following theorem that relates the functions $ e_\ell(n) $ to the $k$-page book crossing numbers $\nu_k(K_n) $. Theorem \ref{theorem:RangeGeneralIntro} is used to prove Conjecture \ref{conj:k-page} for $2k<n\leq3k$, the asymptotic bound of Theorem \ref{theorem:asympt_improvement}, and the lower bound improvements in Table \ref{table:improvement}.

\begin{thm}\label{theorem:RangeGeneralIntro}
	Let $ n \geq 3$ and $ k \geq 3 $ be fixed integers. Then, for all integers $ m\geq 0$,
	\[
	\nu_{k}(K_n) \geq \frac{m}{2}n(n-3)-k\sum_{\ell=0}^{m-1}e_\ell(n).
	\]
\end{thm}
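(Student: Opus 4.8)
The plan is to pass to the circular model, bound the crossings contributed by each colour class separately by a greedy edge-deletion argument, and then add up. Fix an optimal $k$-page book drawing of $K_n$. In the circular model it becomes a $k$-edge-colouring of the complete convex graph $G_n$ whose number of monochromatic crossings is exactly $\nu_k(K_n)$. The $n$ edges of $G_n$ that are sides of the underlying polygon cross nothing, so we may discard them: let $H_1,\dots,H_k$ be the convex subgraphs of $D_n$ consisting of the diagonals of each colour. Then $\sum_{i=1}^{k}|E(H_i)|=|E(D_n)|=\binom n2-n=\tfrac12 n(n-3)$, and $\nu_k(K_n)=\sum_{i=1}^{k}\crg(H_i)$, where $\crg(H)$ denotes the number of crossing pairs of edges of a convex graph $H$.

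The heart of the argument is the following claim: \emph{for every convex subgraph $H$ of $D_n$ with $e$ edges and every integer $m\ge 0$, one has $\crg(H)\ge m\,e-\sum_{\ell=0}^{m-1}e_\ell(n)$.} I would prove this by induction on $m$, the case $m=0$ being trivial. For the inductive step, repeatedly delete from $H$ an edge that is crossed at least $m$ times \emph{in the current graph}, and let $H'$, with $e-r$ edges, be the graph obtained after $r$ such deletions. Since $H'$ is still a convex subgraph of $D_n$ and no longer has an edge crossed $m$ or more times, its local crossing number is at most $m-1$, so $e-r\le e_{m-1}(n)$ by definition of $e_{m-1}(n)$. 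Moreover each deletion destroys at least $m$ crossing pairs, so $\crg(H)\ge \crg(H')+mr$. Applying the inductive hypothesis (with $m-1$ in place of $m$) to $H'$ and combining,
\[
\crg(H)\ \ge\ (m-1)(e-r)-\sum_{\ell=0}^{m-2}e_\ell(n)+mr\ =\ (m-1)e+r-\sum_{\ell=0}^{m-2}e_\ell(n)\ \ge\ m\,e-\sum_{\ell=0}^{m-1}e_\ell(n),
\]
where the last step uses $r\ge e-e_{m-1}(n)$.

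Applying the claim to each $H_i$ and summing over $i=1,\dots,k$ yields
\[
\nu_k(K_n)=\sum_{i=1}^{k}\crg(H_i)\ \ge\ m\sum_{i=1}^{k}|E(H_i)|-k\sum_{\ell=0}^{m-1}e_\ell(n)\ =\ \frac m2\,n(n-3)-k\sum_{\ell=0}^{m-1}e_\ell(n),
\]
which is the desired bound.

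The argument is essentially free once the bookkeeping is set up, so there is no single hard obstacle; the points that need care are (i) that $H'$ remains a convex subgraph of $D_n$, so that $e_{m-1}(n)$ legitimately upper-bounds its number of edges, and (ii) that ``crossed at least $m$ times'' must be read relative to the current graph at each deletion step, which is precisely what guarantees that each deletion removes at least $m$ crossing pairs and hence $\crg(H)\ge\crg(H')+mr$. One should also record explicitly the (standard) equivalence between the book model and the circular model, and the fact that polygon sides contribute no crossings, since the whole reduction rests on it.
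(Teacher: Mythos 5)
Your proposal is correct and follows essentially the same route as the paper: the key lemma $\crg(H)\ge m\,e(H)-\sum_{\ell=0}^{m-1}e_\ell(n)$ proved by induction on $m$ via deletion of edges crossed at least $m$ times, then summed over the $k$ colour classes using $\sum_i e(H_i)=\binom n2-n$. The only cosmetic difference is that you merge the paper's two cases ($e(H)\le e_{m-1}(n)$ versus $e(H)>e_{m-1}(n)$) into a single deletion argument with $r\ge e-e_{m-1}(n)$, which is a slight streamlining of the same idea.
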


In Section \ref{sect:maxedges}, we solve the problem of finding the maximum number of edges among $\ell$-planar convex graphs with no sides when $0 \le \ell \le 4$, in other words we obtain the exact value of the function $ e_\ell(n) $ for $0 \le \ell \le 4$. In Section \ref{sect:bound_e}, we present constructions of graphs with many edges and prescribed local crossing number $\ell$. We also describe the asymptotic behavior of $e_\ell (n)$. In Section \ref{sect:exact_e} we prove that these constructions are optimal by proving the corresponding inequalities required to settle the theorem that follows. 
\begin{restatable}{thm}{ExactValuesE}\label{theorem:epsilons}
	For any $ n\geq 3 $, with the exception of $e_4(3)=0$ and $e_4(4)=2$, 
 \begin{align}
     e_0(n)&=n-3\\
     e_1(n)&=\frac{3}{2}(n-3)+
     \delta_1(n)\\
    e_2(n)&=2(n-3)+
     \delta_2(n)\\
    e_3(n)&=\frac{9}{4}(n-3)+
     \delta_3(n)\\
    e_4(n)&=\frac{5}{2}(n-3)+
     \delta_4(n)
 \end{align}\vspace{-.2in}
 where 
\begin{table}[h!]
	\begin{center}
	\scalebox{.9}
	{{\renewcommand{\arraystretch}{1.5}
			\begin{tabular}{c|c|c|c}
				$\delta_1(n)$ & $\delta_2(n)$ & $\delta_3(n)$ & $\delta_4(n)$ \\
				\hline
					$\begin{array}{ll}
					1/2 & \textup{if } n \equiv 0 \pmod 2,\\
					0   & \textup{otherwise.}
					\end{array}$
					&
					$\begin{array}{ll}
					1 & \textup{if } n \equiv 2 \pmod 3,\\
					0 & \textup{otherwise.}
					\end{array}$
					& 
					$\begin{array}{ll}
					-1/4 & \textup{if } n \equiv 0 \pmod 4,\\
					1/2  & \textup{if } n \equiv 1 \pmod 4,\\
					5/4  & \textup{if } n \equiv 2 \pmod 4,\\
					0    & \textup{if } n \equiv 3 \pmod 4.
					\end{array}$
                    & 
					$\begin{array}{ll}
					1/2 & \textup{if } n \equiv 0 \pmod 4,\\ 
					0   & \textup{if } n \equiv 1 \pmod 4,\\
					3/2 & \textup{if } n \equiv 2 \pmod 4,\\
					1   & \textup{if } n \equiv 3 \pmod 4.
					\end{array}$		
			\end{tabular}}}\vspace{-.3in}
	\end{center}
\end{table}
\end{restatable}
\section{$ k $-page book crossing number of $ K_n $}\label{sect:kbook}
\subsection{Proof of Theorem \ref{theorem:RangeGeneralIntro}}\label{sect:theoremL}
In this section, we prove the lower bound on $ \nu_k(K_n) $ stated in Theorem \ref{theorem:RangeGeneralIntro}. This bound depends on the values of $ e_\ell (n) $. We first give a lower bound on the number of crossings of any convex graph $ H $ in terms of the functions $e_0 (n), e_1 (n), e_2 (n), e_3 (n), e_4(n),\dots$ Recall that $ D_n $ is the complete convex graph without the sides of the underlying polygon. In what follows, for a convex graph $H$ (a drawing), we denote by  $e(H)$, $\crg (H)$, and $\crg_H(a)$ the number of edges in $H$, the number of crossings in $H$, and the number of crossings with the edge $a$ in $H$, respectively.

\begin{thm}\label{theorem:CrossingsMaxEdges}
	Let $ n\geq 3 $ and $ m\geq 0 $ be any  integers. If $ H $ is any subgraph of $ D_n $, then 
	\begin{equation}\label{eq:CrossingsMaxEdges}
	\crg (H) \geq me(H)-\sum_{\ell=0}^{m-1}e_\ell(n).
	\end{equation}
\end{thm}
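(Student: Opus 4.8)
The plan is to prove the inequality by a direct counting argument on crossings, peeling off a sparse subgraph one ``crossing level'' at a time. Starting from $H\subseteq D_n$, define a nested sequence of subgraphs $H=H_0\supseteq H_1\supseteq\cdots\supseteq H_m$ as follows. Having constructed $H_j$ (a subgraph of $D_n$), let $H_{j+1}$ be obtained from $H_j$ by deleting, from $H_j$, a maximum-size set of edges each of which is crossed (within $H_j$) at most $j$ times; equivalently, $H_j\setminus H_{j+1}$ is a maximum subgraph of $H_j$ with local crossing number at most $j$ in $H_j$. The point is that such a set has at most $e_j(n)$ edges, since $H_j\setminus H_{j+1}$ is a convex subgraph of $D_n$ (its own local crossing number is at most $j$), so $e(H_j)-e(H_{j+1})\le e_j(n)$ for each $j$, and summing telescopically gives $e(H)-e(H_m)\le\sum_{\ell=0}^{m-1}e_\ell(n)$.

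The second ingredient is a lower bound on $\crg(H)$ in terms of how many edges survive to each level. The key structural observation I would establish is: every edge that is \emph{not} removed at stage $j$—i.e. every edge of $H_{j+1}$—is crossed at least $j+1$ times \emph{within $H_{j+1}$}. Indeed, if some edge $e$ of $H_{j+1}$ were crossed at most $j$ times in $H_{j+1}$, then $(H_j\setminus H_{j+1})\cup\{e\}$ would be a subgraph of $H_j$ in which $e$ has degree of crossings at most $j$, and every edge of $H_j\setminus H_{j+1}$ still has at most $j$ crossings (adding $e$ only among edges of $H_{j+1}$ cannot create new crossings among the removed edges beyond... ) — this is the delicate point and I address it below. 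Granting this, every edge of $H_m$ participates in at least $m$ crossings of $H_m$, so $2\crg(H_m)\ge m\,e(H_m)$, i.e. $\crg(H_m)\ge \tfrac{m}{2}e(H_m)$; but more useful is a bound that accounts for the edges removed along the way.

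Here is the cleaner bookkeeping I would actually use. For each edge $f$ of $H$, let $j(f)$ be the stage at which $f$ is deleted (so $f\in H_{j(f)}\setminus H_{j(f)+1}$), with $j(f)=m$ if $f$ survives to $H_m$. I claim $f$ is crossed, within $H_{j(f)}$, at least $j(f)$ times — because $f\in H_{j(f)}$ and if it had at most $j(f)-1$ crossings in $H_{j(f)}$ it would already have been eligible (and hence selected, by maximality arguments) for deletion at the earlier stage $j(f)-1$. Summing the crossing counts: each crossing of $H$ between edges $f$ and $g$ is counted in level $\min(j(f),j(g))$ at most… — one must be careful that a crossing survives in $H_j$ only if both its edges are in $H_j$. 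So $\sum_{f} (\text{crossings of } f \text{ in } H_{j(f)}) \le 2\crg(H)$, because each crossing $\{f,g\}$ of $H$ is present in $H_{j}$ for all $j\le\min(j(f),j(g))$ and thus contributes to the tally for $f$ only if $j(f)\le j(g)$, and symmetrically for $g$; in all it is counted at most twice across the sum. Combining, $2\crg(H)\ge \sum_f j(f) = \sum_{j=0}^{m-1} j\bigl(e(H_j)-e(H_{j+1})\bigr) + m\,e(H_m)$, and an Abel summation rewrites the right side as $m\,e(H) - \sum_{\ell=0}^{m-1}\bigl(e(H)-e(H_\ell)\bigr)\cdot(\text{something})$ — after simplification one gets $\crg(H)\ge m\,e(H)-\sum_{\ell=0}^{m-1}e_\ell(n)$, using $e(H)-e(H_\ell)=\sum_{j<\ell}(e(H_j)-e(H_{j+1}))\le\sum_{j<\ell}e_j(n)$ at each step. (The factor of $2$ from double counting is exactly absorbed because each level-$\ell$ removed edge had $\ge\ell$ crossings and the Abel-summation identity $\sum_{j=0}^{m-1} j\, a_j + m A_m = m\sum_j a_j - \sum_{\ell=1}^{m-1}\sum_{j\ge \ell} a_j$ converts to the stated bound.)

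The main obstacle is getting the ``eligibility/maximality'' step exactly right: I must argue that an edge crossed few times in $H_j$ really can be added to the optimal low-crossing-number subgraph without violating the local crossing constraint there, which requires tracking crossings among removed edges versus crossings with the new edge; handling this cleanly is what forces the inductive, level-by-level definition of the $H_j$ rather than a one-shot decomposition, and it is the only place where more than routine bookkeeping is needed.
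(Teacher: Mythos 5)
Your level-by-level decomposition has a genuine gap, and it is exactly at the two places you flagged as delicate. First, the claim that each edge $f$ is crossed at least $j(f)$ times \emph{within} $H_{j(f)}$ does not follow from your eligibility argument: surviving stage $j(f)-1$ only tells you $f$ has at least $j(f)$ crossings in the larger graph $H_{j(f)-1}$, and some of those crossing partners may themselves be deleted in passing from $H_{j(f)-1}$ to $H_{j(f)}$ (an edge $g$ crossing $f$ need not have enough crossings of its own to survive). This particular defect is repairable by tallying crossings of $f$ in $H_{j(f)-1}$ instead, which still double-counts each crossing at most twice. But the second, fatal problem is the bookkeeping itself: the inequality you actually obtain is $2\crg(H)\ge\sum_f j(f)=\sum_{j=1}^{m}e(H_j)$, and combining this with $e(H_j)\ge e(H)-\sum_{\ell=0}^{j-1}e_\ell(n)$ yields only
\[
\crg(H)\ \ge\ \tfrac{1}{2}\Bigl(m\,e(H)-\sum_{\ell=0}^{m-1}(m-\ell)\,e_\ell(n)\Bigr),
\]
which is weaker than the target in two ways: the coefficient of $e(H)$ is $m/2$ rather than $m$, and each $e_\ell(n)$ is subtracted with multiplicity $m-\ell$ rather than $1$. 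The assertion that the factor of $2$ is ``exactly absorbed'' by the Abel summation is not true; already for $m=1$ your argument gives $\crg(H)\ge\frac{1}{2}\bigl(e(H)-e_0(n)\bigr)$, half of what the theorem claims.

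The reason the paper's proof does not lose this factor of $2$ is that it never uses a degree-sum over all edges. It inducts on $m$ and, in the case $e(H)>e_{m-1}(n)$, deletes edges \emph{one at a time}: as long as more than $e_{m-1}(n)$ edges remain, the definition of $e_{m-1}(n)$ supplies an edge with at least $m$ crossings among the \emph{currently remaining} edges, and deleting it destroys at least $m$ crossings. Because each crossing is destroyed exactly once (when the first of its two edges is removed), the counts from different deletion steps are disjoint and simply telescope: $\crg(H)\ge m\bigl(e(H)-e_{m-1}(n)\bigr)+\crg(H_c)$, after which the induction hypothesis applied to the surviving graph with $e_{m-1}(n)$ edges gives the stated bound. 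If you want to keep your simultaneous ``peel off a whole level'' decomposition, you would need to replace the double-counting step by an attribution of each crossing to a single edge (e.g.\ the one deleted first in a sequential refinement of your levels), at which point you have essentially reconstructed the paper's argument.
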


\begin{proof}
	We prove the result by induction on $ m $. Inequality (\ref{eq:CrossingsMaxEdges}) is trivially true for $ m=0 $ as the right hand side of (\ref{eq:CrossingsMaxEdges}) is 0. Let $ m\geq 1 $ and let $ H $ be a subgraph of $ D_n $. We consider two cases. 
	
	\paragraph{Case 1:} $ e(H)\leq e_{m-1}(n) $. By induction 
	\begin{eqnarray}
	\crg (H) &\geq &(m-1)e(H)-\sum_{\ell=0}^{m-2}e_\ell(n)\nonumber\\&=& (m-1)e(H)-\sum_{\ell=0}^{m-2}e_\ell(n)+e(H)-e(H)+e_{m-1}(n)-e_{m-1}(n) \nonumber \\
	& = & me(H)-\sum_{\ell=0}^{m-1}e_\ell(n)+(e_{m-1}(n)-e(H)) \geq me(H)-\sum_{\ell=0}^{m-1}e_\ell(n).   \nonumber 
	\end{eqnarray}
	
	\paragraph{Case 2:} $ e(H) > e_{m-1}(n) $. Let $ c= e(H) - e_{m-1}(n)$ and $ H_0=H $. For $ 0\leq i \leq c-1 $, recursively
	define $ a_i $ and $ H_{i+1} $ as follows. The graph $ H_i $ has $ e(H)-i\geq e(H)-(c-1)>e_{m-1} (n)$ edges. Thus, by definition of $ e_{m-1}(n) $, there exists an edge $ a_i $ in $ H_i $ that is crossed at least $ m $ times. Let $ H_{i+1} $ be the graph obtained from $ H_i $ by deleting $ a_i $. We have the following. 
	\begin{equation*}
	\crg (H)  =  \sum_{i=0}^{c-1}\crg{\hspace{-0.2em}}_{H_i}(a_i) +\crg (H_c) \geq mc+\crg (H_c)
	\end{equation*}
	By induction, $ \crg (H_c)\geq (m-1)e(H_c)-\sum_{\ell=0}^{m-2}e_\ell(n)$ and thus
	\begin{eqnarray}
	\crg (H) & \geq & mc+ (m-1)(e(H)-c)-\sum_{\ell=0}^{m-2}e_\ell(n)\nonumber\\
	&=& m(e(H)-e_{m-1}(n))+(m-1)e_{m-1}(n)-\sum_{\ell=0}^{m-2}e_\ell(n)=me(H)-\sum_{\ell=0}^{m-1}e_\ell(n).\nonumber
	\end{eqnarray}
\end{proof}

For any integers $ k\geq 1 $, $ n\geq 3 $, and $ m \geq 0 $, define
\begin{equation}\label{eq:defLk,n(m)}
L_{k,n}(m):=\frac{m}{2}n(n-3)-k\sum_{\ell=0}^{m-1}e_\ell(n).
\end{equation}
Note that $L_{k,m}$ is precisely the right hand side of the inequality in Theorem \ref{theorem:RangeGeneralIntro}. Then Theorem \ref{theorem:RangeGeneralIntro} can be stated as follows and we prove it using Theorem \ref{theorem:CrossingsMaxEdges}.

\begin{thm}[Equivalent to Theorem \ref{theorem:RangeGeneralIntro}]\label{theorem:RangeGeneral}
	Let $ n \geq 3$ and $ k \geq 3 $ be fixed integers. Then, for all integers $ m\geq 0$,
	\[
	\nu_{k}(K_n) \geq L_{k,n}(m).
	\]
\end{thm}

\begin{proof}
	Consider any $ k $-coloring of the edges of $ D_n $ using colors $ 1,2,\ldots,k $. Let $ H_i $ be the graph on the same set of vertices as $ D_n $ whose edges are those of color $ i $.  By Theorem \ref{theorem:CrossingsMaxEdges},
	\begin{equation}\label{eq:twostepinequality}
	cr(H_i)\geq me(H_i)-\sum_{\ell=0}^{m-1}e_\ell(n).
	\end{equation}
	Add (\ref{eq:twostepinequality}) over all colors, $ 1\leq i \leq k $, to show that the number of monochromatic crossings in the $ k $-edge coloring is at least 
	\begin{eqnarray}
	\sum_{i=1}^{k} \left(me(H_i) -\sum_{\ell=0}^{m-1}e_\ell(n)\right) &=&
	me(D_n)- k\sum_{\ell=0}^{m-1}e_\ell(n)\nonumber\\
	=m\left({n \choose 2} - n\right) -k\sum_{\ell=0}^{m-1}e_\ell(n)
	&=&\frac{m}{2}n(n-3)-k\sum_{\ell=0}^{m-1}e_\ell(n)=L_{k,n}(m).\nonumber 
	\end{eqnarray}
\end{proof}

\subsection{Exact values of $ \nu_k(K_n) $ and new lower bounds}\label{sect:exact}
Since Theorem \ref{theorem:RangeGeneral} works for any nonnegative integer $ m $, we start by maximizing $ L_{k,n}(m) $ as defined in (\ref{eq:defLk,n(m)}) for fixed $ k $ and $ n $ to obtain the best possible lower bound provided by Theorem \ref{theorem:RangeGeneral}.

\begin{prop}\label{prop:RangeGeneral}
	For fixed integers $ k\geq 1 $ and $ n\geq 3 $, the value of  $ L_{k,n}(m) $, defined over all nonnegative integers $ m $, is maximized by the smallest $ m $ such that $   e_{m}(n) \geq \frac{n(n-3)}{2k}$.
\end{prop}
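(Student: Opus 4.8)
The plan is to analyze $L_{k,n}(m)$ as a function of the integer variable $m$ by looking at its discrete forward differences. Write $D(m) := L_{k,n}(m+1) - L_{k,n}(m)$. Since $L_{k,n}(m) = \frac{m}{2}n(n-3) - k\sum_{\ell=0}^{m-1} e_\ell(n)$, we get immediately
\[
D(m) = \frac{n(n-3)}{2} - k\, e_m(n).
\]
So $D(m) \geq 0$ exactly when $e_m(n) \leq \frac{n(n-3)}{2k}$, and $D(m) < 0$ exactly when $e_m(n) > \frac{n(n-3)}{2k}$.

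The key structural fact I would invoke is that $e_m(n)$ is \emph{nondecreasing} in $m$: allowing more crossings per edge can only permit more edges, so $e_0(n) \le e_1(n) \le e_2(n) \le \cdots$. (In fact $e_m(n)$ eventually equals $\binom n2 - n$, the number of edges of $D_n$, once $m$ is large enough, but we only need monotonicity.) Consequently the sequence $D(0), D(1), D(2), \dots$ is nonincreasing: once it turns negative it stays negative, and before that it is nonnegative. Therefore $L_{k,n}$ is first nondecreasing and then nonincreasing, so it is maximized precisely at the threshold index — the smallest $m$ with $D(m) < 0$, equivalently the smallest $m$ with $e_m(n) > \frac{n(n-3)}{2k}$. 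One then has to reconcile this with the statement as written, which says "the smallest $m$ such that $e_m(n) \ge \frac{n(n-3)}{2k}$." The plan is to observe that at the maximizing index $m^\star$ one has $D(m^\star - 1) \ge 0 \ge D(m^\star)$ only in the generic case; the paper's phrasing instead keys on $e_m(n) \ge \frac{n(n-3)}{2k}$, so I would argue that if $m_0$ is the smallest index with $e_{m_0}(n) \ge \frac{n(n-3)}{2k}$, then $L_{k,n}$ is constant on $\{m_0-1, m_0\}$ when equality $e_{m_0-1}(n) = \frac{n(n-3)}{2k}$ fails to hold at $m_0-1$, and otherwise $m_0$ is itself an optimum; either way $m_0$ realizes the maximum. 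I would spell this case distinction out carefully so the stated formula is literally correct (picking the smaller of two tied optimal indices if a tie occurs).

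The main obstacle is precisely this boundary bookkeeping: the clean "first up then down" argument gives a maximizer, but matching it to the exact index described in the proposition — "smallest $m$ with $e_m(n) \ge \frac{n(n-3)}{2k}$" rather than "smallest $m$ with $e_m(n) > \frac{n(n-3)}{2k}$" — requires handling the equality case $e_m(n) = \frac{n(n-3)}{2k}$ (where $D(m) = 0$ and the two adjacent indices give the same value of $L_{k,n}$), and I would want to confirm the stated index is always among the maximizers. Everything else is a one-line difference computation plus the monotonicity of $e_m(n)$, which is immediate from the definition. A small degenerate point to mention: if no $m$ satisfies $e_m(n) \ge \frac{n(n-3)}{2k}$ the statement is vacuous, but since $e_m(n)$ is eventually $\binom n2 - n \ge \frac{n(n-3)}{2}\ge \frac{n(n-3)}{2k}$ for $k\ge 1$, such an $m$ always exists, so the proposition is well-posed.
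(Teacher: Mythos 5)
Your argument is correct and is essentially the paper's proof: both reduce to the observation that the increments $L_{k,n}(m+1)-L_{k,n}(m)=\tfrac{n(n-3)}{2}-k\,e_m(n)$ are nonincreasing because $e_m(n)$ is nondecreasing in $m$, so $L_{k,n}$ is unimodal and peaks at the threshold index, with existence of the threshold following from $e_\ell(n)=\binom{n}{2}-n$ for large $\ell$. The only slip is in your tie discussion: since $m_0$ is minimal with $e_{m_0}(n)\ge \tfrac{n(n-3)}{2k}$, one has $e_{m_0-1}(n)<\tfrac{n(n-3)}{2k}$ strictly, so $L_{k,n}(m_0)>L_{k,n}(m_0-1)$ always, and the only possible ties are $L_{k,n}(m_0)=L_{k,n}(m_0+1)=\cdots$ when $e_{m_0}(n)$ equals the threshold exactly --- in every case $m_0$ attains the maximum, as claimed.
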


\begin{proof}
	Note that $$ L_{k,n}(m)=\sum_{\ell=0}^{m-1} \left(\frac{n}{2}(n-3)-ke_\ell(n)\right)
	=k\sum_{\ell=0}^{m-1} \left(\frac{n(n-3)}{2k}-e_\ell(n)\right). $$ So $ L_{k,n} $ is increasing as long as $ e_{m-1}(n)<\frac{n(n-3)}{2k} $ and nonincreasing afterwards. Thus the maximum is achieved when $ m $ is the smallest integer such that $e_m(n) \geq \frac{n(n-3)}{2k}$. Note that this value in fact exists because $ e_\ell(n)=\binom{n}{2}-n\geq \frac{n(n-3)}{2k} $ whenever $ \ell > \floor{\frac{n-2}{2}}\ceil{\frac{n-2}{2}} $. 
\end{proof}

We now use Proposition \ref{prop:RangeGeneral} to explicitly state the best lower bounds guaranteed by Theorem \ref{theorem:RangeGeneral} using the values of $ e_\ell(n) $ obtained in Section \ref{sect:maxedges}. In what follows, $ \delta_1(n) $, $ \delta_2(n) $, $ \delta_3(n) $, and $ \delta_4(n) $ are defined as in Theorem \ref{theorem:epsilons}.
\begin{thm}\label{theorem:rangeall}
	For any integers $ k\geq 3 $ and $ n> 2k $,
	\begin{equation}\label{ineq:rangeall}
	\nu_{k}(K_n) \geq 
	\left\{
	\begin{array}{ll}
	\mathstrut L_{k,n}(1)=\frac{1}{2}(n-3)(n-2k) & \textup{if } 2k <n\leq 3k,\vspace{0.05in}\\
	L_{k,n}(2)=(n-3)(n-\frac{5}{2}k)-k\delta_1(n)                 & \textup{if } 3k <n\leq 4k,\\
	L_{k,n}(3)=\frac{3}{2}(n-3)(n-3k)-k(\delta_1+\delta_2)(n) & \textup{if } 4k <n\leq 	\floor{4.5k} +\beta\\
	L_{k,n}(4)=2(n-3)(n-\frac{27}{8}k)-k(\delta_1+\delta_2+\delta_3)(n)  & \textup{if } \floor{4.5k} +\beta <n\leq 5k\\
	L_{k,n}(5)=\frac{5}{2}(n-3)(n-\frac{37}{10}k)-k(\delta_1+\delta_2+\delta_3+\delta_4)(n)  & \textup{otherwise.}
	\end{array}
	\right.
	\end{equation}
	where $ \beta=
	\left\{
	\begin{array}{ll}
	-1 & \hspace{-.1in}\textup{if } k \textup{ even and } 4|n,\\
	1 & \hspace{-.1in}\textup{if } k \textup{ odd and } 4|n-2,\\
	0  & \hspace{-.1in}\textup{otherwise,}
	\end{array}
	\right. $
\end{thm}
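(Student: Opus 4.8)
The plan is to apply Proposition~\ref{prop:RangeGeneral} together with Theorem~\ref{theorem:RangeGeneral} and the explicit values of $e_\ell(n)$ from Theorem~\ref{theorem:epsilons}. By Proposition~\ref{prop:RangeGeneral}, the optimal choice of $m$ in Theorem~\ref{theorem:RangeGeneral} is the smallest $m$ with $e_m(n)\ge \frac{n(n-3)}{2k}$, and the resulting bound is $L_{k,n}(m)=\frac{m}{2}n(n-3)-k\sum_{\ell=0}^{m-1}e_\ell(n)$. So the first step is to translate, for each value of $m\in\{1,2,3,4,5\}$, the inequality $e_{m-1}(n)<\frac{n(n-3)}{2k}\le e_m(n)$ into a condition on the ratio $n/k$, using $e_\ell(n)=C_\ell(n-3)+\delta_\ell(n)$. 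Since $C_0=1,C_1=3/2,C_2=2,C_3=9/4,C_4=5/2$, dividing through by $(n-3)$ shows the threshold between consecutive regimes is governed by comparing $\frac{n}{2k}$ (approximately) against $C_\ell$; the $\delta_\ell$ corrections and floors produce the exact cutoffs $3k$, $4k$, $\lfloor 4.5k\rfloor+\beta$, $5k$, with $\beta$ absorbing precisely the $\delta_\ell(n)$ terms that shift the boundary by one when $4\mid n$ (the sign depending on the parity of $k$, as in the stated formula).

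The second step is to substitute the chosen $m$ and the telescoped sum $\sum_{\ell=0}^{m-1}e_\ell(n)=\big(\sum_{\ell=0}^{m-1}C_\ell\big)(n-3)+\sum_{\ell=0}^{m-1}\delta_\ell(n)$ into $L_{k,n}(m)$ and simplify. For instance, when $m=1$ one gets $L_{k,n}(1)=\frac12 n(n-3)-k(n-3)=\frac12(n-3)(n-2k)$, matching the first line. For $m=2$: $\sum_{\ell=0}^{1}C_\ell=1+\frac32=\frac52$, giving $L_{k,n}(2)=n(n-3)-k\big(\frac52(n-3)+\delta_1(n)\big)=(n-3)(n-\frac52 k)-k\delta_1(n)$, the second line. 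For $m=3,4,5$ the partial sums of $C_\ell$ are $\frac92,\frac{27}{4},\frac{37}{4}$, and multiplying $L$ out yields the coefficients $\frac32$, $2$, $\frac52$ in front and the shifts $3k$, $\frac{27}{8}k$, $\frac{37}{10}k$ respectively (e.g. $\frac{k\cdot\frac{27}{4}}{2}=\frac{27k}{8}$ after factoring the leading $2$). The $\delta$-sums $\delta_1+\delta_2$, $\delta_1+\delta_2+\delta_3$, $\delta_1+\delta_2+\delta_3+\delta_4$ appear exactly as written. This is all routine algebra once the case boundaries are pinned down.

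The main obstacle is the careful bookkeeping at the case boundaries, especially the appearance of $\beta$. The issue is that $e_\ell(n)=C_\ell(n-3)+\delta_\ell(n)$ is not a clean quadratic: when $n/k$ is near $4.5$, whether $e_2(n)$ or $e_3(n)$ first reaches $\frac{n(n-3)}{2k}$ depends on the residue of $n$ modulo $4$ (through $\delta_2(n)$ and $\delta_3(n)$) and on whether $\frac{n(n-3)}{2k}$ lands just above or just below $C_2(n-3)+\delta_2(n)$, which in turn involves the integrality of various expressions in $k$. One must solve $2(n-3)+\delta_2(n)<\frac{n(n-3)}{2k}$ versus its negation, i.e. compare $n$ with $4k+\frac{2k\delta_2(n)}{n-3}$, and since $\delta_2(n)\in\{0,1\}$ and similarly analyze the $e_3$ side, the exact cutoff shifts by $\pm1$ precisely in the two cases described. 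I would handle this by fixing the residue of $n\bmod 4$ and the parity of $k$, checking each of the (finitely many) combinations directly, and verifying that the floor expression $\lfloor 4.5k\rfloor+\beta$ reproduces the correct threshold in every case. A parallel but milder check is needed at $n=3k$ and $n=4k$ and $n=5k$, where the $\delta_\ell$ at the relevant $\ell$ happen to vanish or the inequality is non-strict in a way consistent with Proposition~\ref{prop:RangeGeneral}'s "smallest $m$" convention; these are easier because $C_\ell$ is an integer or half-integer and the boundary is exact. Once every residue class is dispatched, the five-line formula follows by collecting terms.
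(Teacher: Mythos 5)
Your proposal is correct and follows essentially the same route as the paper: identify each row of the bound as $L_{k,n}(m)$ for $m=1,\dots,5$ via Theorem~\ref{theorem:RangeGeneral} and Theorem~\ref{theorem:epsilons}, and use Proposition~\ref{prop:RangeGeneral} to match the ranges of $n/k$ to the optimal $m$; your partial sums $\tfrac52,\tfrac92,\tfrac{27}{4},\tfrac{37}{4}$ and the resulting coefficients all check out. The only remark worth adding is that the boundary bookkeeping you flag as the main obstacle affects only the optimality of the choice of $m$, not the validity of the inequality, since each $L_{k,n}(m)$ is a lower bound for every $n$.
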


\begin{proof}
	Let $1\leq m\leq 5$. The $ m ^{th}$ row on the right hand side of Inequality (\ref{ineq:rangeall}) is equal to $ L_{k,n}(m) $, using the values of $ e_0(n) $,  $ e_1(n) $, $ e_2(n) $, $ e_3(n)$, and $ e_4(n) $, stated in Theorem \ref{theorem:epsilons}. In each case, the range for $ n $ corresponds to the values of $ n $ for which $ e_{m-1}(n)<\frac{n(n-3)}{2k} \leq e_m(n)$, thus guaranteeing the best possible bound obtained from Theorem \ref{theorem:RangeGeneral} by Proposition \ref{prop:RangeGeneral}. For example, by Theorems \ref{theorem:epsilons} and \ref{theorem:RangeGeneral}, 
	\[
	\nu_k(K_n) \geq L_{k,n}(1)=\frac{1}{2}n(n-3)-ke_0(n)= \frac{1}{2}n(n-3)-k(n-3)=\frac{1}{2}(n-3)(n-2k).
	\]
	Although this bound holds for any $ n $ and is tight for $2 < \frac{n}{k} \leq 3$ (as stated in the next result), it can actually be improved for larger values of $ n $. For instance, using $ m=2 $
	in Theorem \ref{theorem:RangeGeneral} together with Theorem \ref{theorem:epsilons} yields
	\[
	\nu_k(K_n) \geq L_{k,n}(2)=n(n-3)-k\left((n-3)+\frac{3}{2}(n-3)+\delta_1(n)\right)=(n-3)(n-\frac{5}{2}k)-\delta_1(n)k.
	\]
	By Proposition \ref{prop:RangeGeneral}, this is the best bound guaranteed by Theorem \ref{theorem:RangeGeneral} whenever $ 3<n/k\leq 4 $ because $ e_1(n)=\frac{3}{2}(n-3)+\delta_1(n)<\frac{n(n-3)}{2k} \leq 2(n-3)\leq e_2(n)$ in this range.
\end{proof}

The first part of Theorem \ref{theorem:rangeall} settles Conjecture \ref{conj:k-page} when $2 < \frac{n}{k} \leq 3$.

\begin{thm}\label{theorem:range2to3}
	If $2 < \frac{n}{k} \leq 3$, then $$\nu_{k}(K_n) = \frac{1}{2}(n-3)(n-2k).$$
\end{thm}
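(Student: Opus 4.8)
The lower bound is already in hand, so the only thing I would need to supply is the matching upper bound. Indeed, by Theorem~\ref{theorem:epsilons} one has $e_0(n)=n-3$, so the instance $m=1$ of Theorem~\ref{theorem:RangeGeneral} (equivalently, the first case of Theorem~\ref{theorem:rangeall}) already gives
\[
\nu_k(K_n)\ \geq\ L_{k,n}(1)\ =\ \frac{1}{2}n(n-3)-k\,e_0(n)\ =\ \frac{1}{2}n(n-3)-k(n-3)\ =\ \frac{1}{2}(n-3)(n-2k),
\]
and this holds for every $k\geq 1$ (the proof of Theorem~\ref{theorem:RangeGeneral} never uses $k\geq 3$). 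Hence the content of the theorem lies entirely in showing $\nu_k(K_n)\leq\frac{1}{2}(n-3)(n-2k)$.

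For the upper bound the plan is to appeal to one of the constructions of Section~\ref{sect:const} (equivalently, the DPS construction of \cite{DPS13}), which shows $\nu_k(K_n)\leq Z_k(n)$, and then to evaluate $Z_k(n)$ in the range $2<n/k\leq 3$. Write $n=2k+r$ with $0<r\leq k$. From $(\ref{eq:zk2})$, $F(r,n)=\frac{r(r-1)(r-2)}{24}(2n-3-r)$, so in particular $F(1,n)=F(2,n)=0$ and $F(3,n)=\frac{6}{24}(2n-6)=\frac{n-3}{2}$. If $r<k$, then $\lfloor n/k\rfloor=2$ and $n\bmod k=r$, so $(\ref{eq:zk1})$ gives
\[
Z_k(n)\ =\ r\,F(3,n)+(k-r)\,F(2,n)\ =\ r\cdot\frac{n-3}{2}\ =\ \frac{1}{2}(n-2k)(n-3).
\]
If instead $r=k$, i.e.\ $n=3k$, then $\lfloor n/k\rfloor=3$ and $n\bmod k=0$, so
\[
Z_k(n)\ =\ 0\cdot F(4,n)+k\,F(3,n)\ =\ k\cdot\frac{n-3}{2}\ =\ \frac{1}{2}(n-2k)(n-3)
\]
as well. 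Thus $\nu_k(K_n)\leq Z_k(n)=\frac{1}{2}(n-3)(n-2k)$ throughout, and combining with the lower bound finishes the proof.

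I expect no genuine obstacle here: this theorem is exactly the place where the earlier work cashes in, namely the exact value $e_0(n)=n-3$ from Theorem~\ref{theorem:epsilons} together with the existence of a construction meeting the resulting bound. The only spot that calls for a moment's care is the endpoint $n=3k$, where $\lfloor n/k\rfloor$ jumps from $2$ to $3$ and the ``extra matching'' summand in $(\ref{eq:zk1})$ disappears; as the computation above shows, $Z_k(n)$ is nevertheless continuous across this jump precisely because $F(2,n)=0$. If one prefers an argument that does not quote $Z_k(n)$, the same bound follows from a direct description in the circular model: since $\lfloor n/k\rfloor=2$, tile the cyclic sequence of parallel matchings $g_0,\dots,g_{n-1}$ of $G_n$ by $n-2k$ blocks of three consecutive matchings and $3k-n$ blocks of two, and give each block its own colour; a page built from two consecutive matchings is crossing-free (consecutive matchings of $G_n$ do not cross, which is the statement $F(2,n)=0$), while the crossings contributed by the blocks of three — counted exactly as in the analysis of the DPS construction — total $\frac{1}{2}(n-2k)(n-3)$.
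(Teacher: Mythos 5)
Your proposal is correct and follows essentially the same route as the paper: the lower bound is the $m=1$ case of Theorem~\ref{theorem:RangeGeneral} (via $e_0(n)=n-3$), and the upper bound comes from evaluating $Z_k(n)$ in the range $2<n/k\leq 3$ using $F(2,n)=0$ and $F(3,n)=\tfrac{1}{2}(n-3)$, with the same case split at $n=3k$. The additional direct construction you sketch is a nice aside but not needed.
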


\begin{proof}
	Suppose $2 < \frac{n}{k} \leq 3$. Since $\nu_{k}(K_n)\leq Z_k(n) $, we just need to show that $ Z_k(n) $ is indeed equal to $ \frac{1}{2}(n-3)(n-2k) $ for these values of $ n $ and $ k $. By (\ref{eq:zk2}), $ F(2,n)=0 $ and $ F(3,n)=\frac{1}{2}(n-3) $. If $ n=3k $, then by (\ref{eq:zk1}),
	\[ 
	Z_k(n)= k\cdot F (3,n)=k\cdot\frac{1}{2}(n-3)=\frac{1}{2}(n-3)(n-2k).
	\]
	If $2 < \dfrac{n}{k} < 3$, then $ n=2k+r $, where $  r=n \bmod k $. By (\ref{eq:zk1}),
	\[ 
	Z_k(n)= r\cdot F (3,n)+(k-r) \cdot F (2,n)=\frac{1}{2}(n-3)(n-2k).
	\]
\end{proof}

\subsection{Improving the asymptotics for fixed $ k $}\label{sect:asympt}

The bound in Theorem \ref{theorem:rangeall} becomes weaker as $ n/k $ grows. We now use a different approach to improve this bound when $ n $ is large with respect to $ k $. 

\LowerBookBound*

\begin{proof}
For fixed $ k $ and all $ n\geq n'\geq 4 $, it is known that  (see for example \cite{RT97} or Theorem 2 in \cite{SW94})
$$ \frac{\nu_k(K_n)}{\binom{n}{4}} \geq \frac{\nu_k(K_{n'})}{\binom{n'}{4}}. $$
Let $n'=\lfloor 111k/20 \rfloor$ and note that $n'>5k$ for $k\ge 3$. Thus by Theorem \ref{theorem:rangeall} it follows that 
\begin{equation}\label{eq:asymptotics}
\frac{\nu_k(K_n)}{\binom{n}{4}} \geq \frac{\nu_k(K_{n'})}{\binom{n'}{4}}\ge \frac{\tfrac{5}{2}(n'-3)(n-\tfrac{37}{10}k)-k(\delta_1+\delta_2+\delta_3+\delta_4)(n')}{{\binom{n'}{4}}}. 
\end{equation}
The right-hand side of (\ref{eq:asymptotics}) can be simplified by considering the possible residue classes of $k$ modulo 80. In particular, the minimum of all such expressions over all the residue classes modulo 80 occurs when $k \equiv 7 \pmod {80}$. This is how we obtain a universal bound that does not depend on the residue class. The simplified expression for  $k \equiv 7 \pmod {80}$ gives
\[
\frac{\nu_k(K_n)}{\binom{n}{4}} \ge \frac{8000(4107k^2-5416k+1309)}{37(111k-17)(111k-77)(37k-19)(3k-1)}.
\]
Finally, using $ n'=\floor{111k/20}$ for $ 14\leq k \leq 20$ in (\ref{eq:asymptotics}), we obtain the bounds in Table \ref{table:n_15_to_20}. These new bounds improve those in \cite{DPS13} as shown in Table \ref{table:improvement}.
\end{proof}

\begin{table}[h]
	\begin{center}
		\scalebox{1}{
			\begin{tabular}{  DEE |DEE}
				$ k \vspace{.1in}$ & $ \nu_k(K_n) \geq $ \vspace{.1in}& for all $ n\geq $ \vspace{.1in} &   $ k \vspace{.1in}$ & $ \nu_k(K_n) \geq $ \vspace{.1in}& for all $ n\geq $ \vspace{.1in}\\
				\hline 
				\vspace{.1in}
				$ 14 $ & $\frac{4406}{1282975}\binom{n}{4} $ & $ 76 $ &  $ 18 $ & $\frac{8086}{3921225}\binom{n}{4}  $ &  $ 100 $ \\
				\vspace{.1in}
				$ 15 $ & $\frac{640}{214389}\binom{n}{4} $ & $ 84 $ &  $ 19 $ & $\frac{8839}{4780230}\binom{n}{4}  $ &  $ 105 $ \\
				\vspace{.1in}
				$ 16 $ & $ \frac{3054}{1165945}\binom{n}{4} $ & $ 88 $ &   $ 20 $ & $ \frac{85}{51039}\binom{n}{4}  $ & $ 109 $\\
				\vspace{.1in}
				$ 17 $ & $ \frac{6764}{2919735}\binom{n}{4}$ & $ 93 $ & &  &  \\
			\end{tabular}
		}
	\end{center}
	\caption{New lower bounds for $ \nu_k(K_n)$ when $ 14\leq k \leq 20 $.}
	\label{table:n_15_to_20}
\end{table}

\section{Maximizing the number of edges}\label{sect:maxedges}

In this section, we only consider convex graphs that do not use the sides of the polygon, that is, subgraphs of $ D_n $. Given such a graph $ G $ and two of its vertices $ x $ and $ y $, the segment $ xy $ is either an \emph{edge} of $ G $ (which must be a diagonal of the polygon), a \emph{side} of $ G $ (a side of the polygon, and thus not an edge of $ G $), or a \emph{nonedge} of $ G $ (a diagonal of the polygon that is not an edge of $ G $). Let $ e(G) $, $ cr(G) $, and $ \lc(G) $ denote the number of edges, number of crossings, and local crossing number of $ G $, respectively. 

As defined before, $e_\ell(n)$ denotes the maximum number of edges over all subgraphs of $ D_n $ with local crossing number at most $ \ell $. 
In Section \ref{sect:bound_e}, we show constructions that provide lower bounds for the function $e_{\ell}(n)$ for every $0 \le \ell \le 4$ and in Section \ref{sect:exact_e}, we determine the exact value of $e_{\ell}(n)$ for $\ell \leq 4$ by proving the corresponding upper bounds.

\subsection{Lower bounds for $e_{\ell}(n)$: constructions}\label{sect:bound_e}

For every $0 \le \ell \le 4$, and $n\ge 3$, we construct convex graphs with many edges and where every edge is crossed at most $\ell$ times. 

\begin{thm} \label{thm:lower bound}For any $0 \le \ell \le 4$ and $n\ge 3$, there are convex subdrawings $G_{\ell,n}$ of $D_n$ such that $\lc(G_{\ell,n}) \le \ell$ and
\begin{align*}
     e_0(G_{0,n})&=n-3\\
     e_1(G_{1,n})&=\frac{3}{2}(n-3)+\delta_1(n)\\
    e_2(G_{2,n})&=2(n-3)+\delta_2(n)\\
    e_3(G_{3,n})&=\frac{9}{4}(n-3)+\delta_3(n)\\
    e_4(G_{4,n})&=\frac{5}{2}(n-3)+\delta_4(n)\text{ for }n\ge 5.
 \end{align*}
\end{thm}

\begin{proof}
    The constructions are simple. For every $0 \le \ell \le 4$, we begin with an appropriate graph $B_\ell$ which is the largest complete graph (or close to it) that has local crossing number at most $\ell$. In fact, $B_0$, $B_1$, $B_2$, $B_3$, and $B_4$ are respectively, $D_3$, $D_4$, $D_5$, $D_6^-$ (defined as $D_6$ minus one of its main diagonals), and $D_6$. The idea is to join together several copies of $B_\ell$, using uncrossed diagonals, to an appropriate initial graph with local crossing number at most $\ell$ (see Figure \ref{fig:constructions_all_e}). The specific details follow.
    If $\ell=0$, then $G_{0,n}$ consists of joining together $n-2$ copies of $D_3$, see Figure \ref{fig:constructions_all_e}(a). That is, $G_{0,n}$ consists of the inner edges of a triangulation of a convex graph on $n$ vertices and so $e(G_{0,n})=n-3$.
    

     If $\ell=1$, then $G_{1,n}$ consists of joining $\lfloor (n-3)/2 \rfloor$ copies of $D_4$ and a single copy of an initial graph depending on the parity of $n$, see Figure \ref{fig:constructions_all_e}(b).   If $n$ is even, then the initial graph is $D_4$; and if $n$ is odd, then the initial graph is $D_3$. The resulting convex graph has $n$ vertices and local crossing number at most 1. There are 2 edges for every $D_4$, either 0 or 2 edges in the initial graph, and $\lfloor (n-3)/2 \rfloor$ uncrossed diagonals for a total number of edges  
     \begin{align*}
     e(G_{1,n})&=2 \left\lfloor \frac{n-3}{2} \right\rfloor+\left\lfloor \frac{n-3}{2} \right\rfloor+\begin{cases}
		2 & \textup{if } n \textup{ is odd},\\
	    0  &  \textup{if } n \textup{ is even.} \end{cases}\\
     &= \tfrac{3}{2}(n-3)+\delta_1(n).
     \end{align*}
 
     If $\ell=2$, then $G_{2,n}$ consists of joining together $\lfloor (n-3)/3 \rfloor$ copies of $D_5$ and a single copy of one of the following initial graphs depending on $n\pmod 3$: $D_3$ if $n \equiv 0 \pmod 3$, $D_4$ if $n \equiv 1 \pmod 3$, and $D_5$ if $n \equiv 2 \pmod 3$; see Figure \ref{fig:constructions_all_e}(c). The resulting convex graph has $n$ vertices and local crossing number at most 2. There are 5 edges for every $D_5$, either $0$, $2$, or $5$ edges in the initial graph, and $\lfloor (n-3)/3 \rfloor$ uncrossed diagonals for a total number of edges   
     \begin{align*}
     e(G_{2,n})&=5 \left\lfloor \frac{n-3}{3} \right\rfloor+\left\lfloor \frac{n-3}{3} \right\rfloor+\begin{cases}
		0 & \textup{if }n \equiv 0 \pmod 3,\\
	    2  &  \textup{if } n \equiv 1 \pmod 3,\\
        5   &  \textup{if } n \equiv 2 \pmod 3 \end{cases}\\
     &= 2(n-3)+\delta_2(n).
     \end{align*}  
     
\begin{figure}[bt]
	\centering
	\includegraphics[width=.8\linewidth]{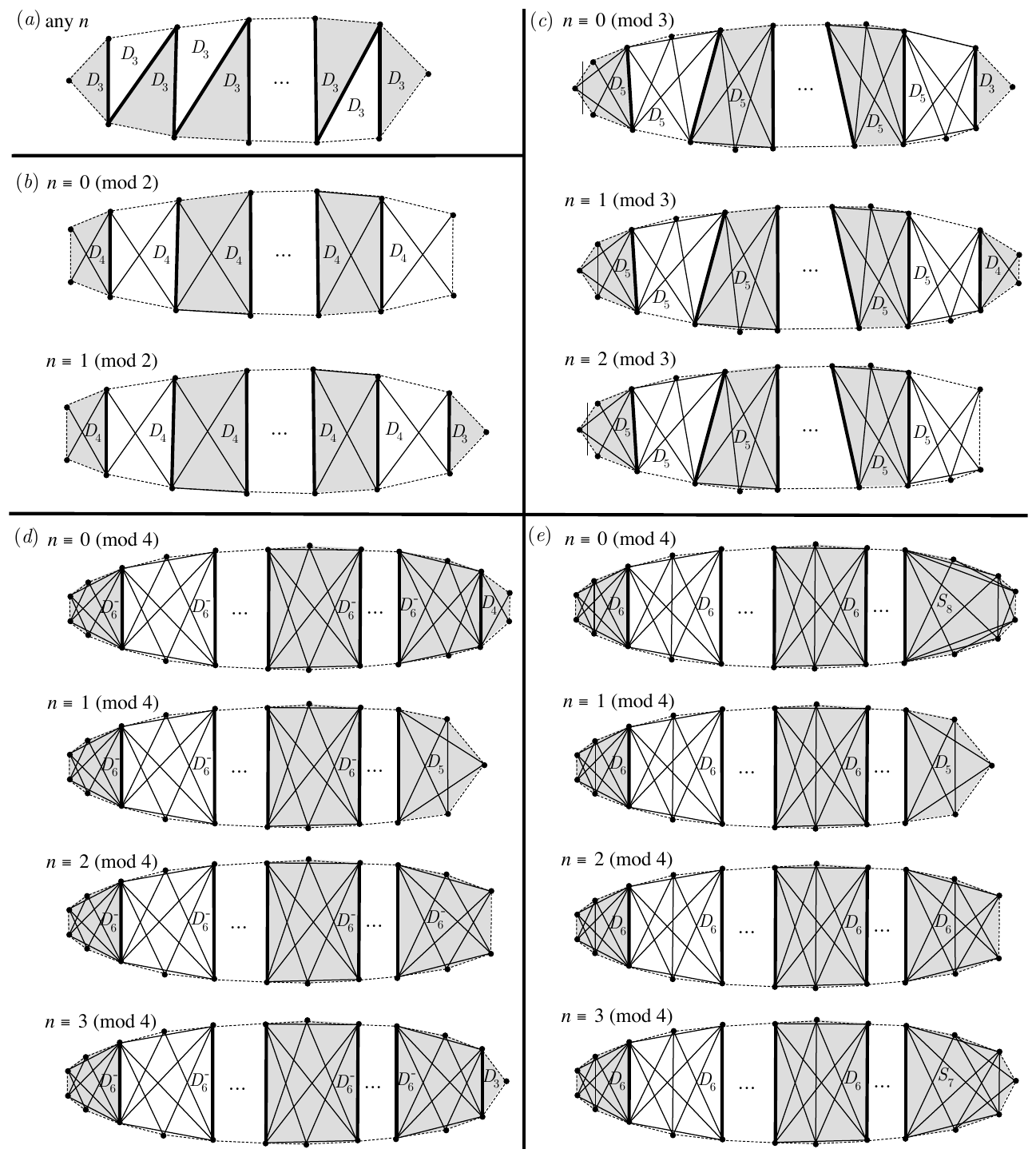}
	\caption{The graphs $G_{\ell,n}$, $0\leq \ell \leq 4$, which maximize the number of edges among those convex graphs with $n$ vertices and local crossing number $\ell$: (a) $\ell=0$, (b) $\ell=1$, (c) $\ell=2$, (d) $\ell=3$, and (e) $\ell=4$.}
	\label{fig:constructions_all_e}
\end{figure}
        
       If $\ell=3$, then $G_{3,n}$ consists of joining together $\lfloor (n-3)/4 \rfloor$ copies of $D_6^-$ and a single copy of one of the following initial graphs depending on $n\pmod 4$:  $D_4$ if $n \equiv 0 \pmod 4$, $D_5$ if $n \equiv 1 \pmod 4$,  $D_6^-$ if $n \equiv 2 \pmod 4$, and $D_3$ if $n \equiv 3 \pmod 4$; see Figure \ref{fig:constructions_all_e}(d). The resulting graph has $n$ vertices and local crossing number at most 3. There are 8 edges for every $D_6^-$, either $2$, $5$, $8$, or $0$ edges in the initial graph, and $\lfloor (n-3)/4 \rfloor$ uncrossed diagonals for a total number of edges  
     \begin{align*}
     e(G_{3,n})&=8 \left\lfloor \frac{n-3}{4} \right\rfloor+\left\lfloor \frac{n-3}{4} \right\rfloor+\begin{cases}
		2 & \textup{if }n \equiv 0 \pmod 4,\\
	    5  &  \textup{if } n \equiv 1 \pmod 4,\\
        8  &  \textup{if } n \equiv 2 \pmod 4,\\
        0   &  \textup{if } n \equiv 3 \pmod 4. \end{cases}\\
     &= \tfrac{9}{4}(n-3)+\delta_3(n).
     \end{align*}

If $\ell=4$ and $n\ge 5$, then $G_{4,n}$ consists of joining together $\lfloor (n-5)/4 \rfloor$ copies of $D_6$ and a single copy of one of the following initial graphs depending on $n\pmod 4$. If $n \equiv 0 \pmod 4$, then the initial graph is the graph $S_8$ in Figure \ref{fig:FigGraphsC4}(c), it has 8 vertices, 13 edges, and local crossing number 4, if $n \equiv 1 \pmod 4$, then the initial graph is $D_5$, if $n \equiv 2 \pmod 4$, then the initial graph is $D_6$,  and if $n \equiv 3 \pmod 4$, then the initial graph is either $S_7$ or $S'_7$ shown in Figure \ref{fig:FigGraphsC4}(a-b), it has 7 vertices, 11 edges, and local crossing number 4. The resulting graph has $n$ vertices and local crossing number at most 4. There are 9 edges for every $D_6$, either $13$, $5$, $9$, or $11$ edges in the initial graph, and $\lfloor (n-5)/4 \rfloor$ uncrossed diagonals for a total number of edges 
     \begin{align*}
     e(G_{4,n})&=9 \left\lfloor \frac{n-5}{4} \right\rfloor+\left\lfloor \frac{n-5}{4} \right\rfloor+\begin{cases}
		13 & \textup{if }n \equiv 0 \pmod 4,\\
	    5  &  \textup{if } n \equiv 1 \pmod 4,\\
        9  &  \textup{if } n \equiv 2 \pmod 4,\\
        11   &  \textup{if } n \equiv 3 \pmod 4. \end{cases}\\
     &= \tfrac{5}{2}(n-3)+\delta_4(n).
     \end{align*}    
\end{proof}

\begin{figure}
\centering
\includegraphics[width=0.7\linewidth]{./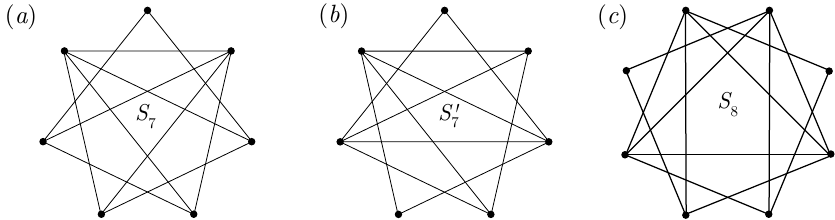}
\caption{(a-b) The graphs $ S_7 $ and $ S_7' $, the unique subgraphs of $ D_7 $ with $ 11 $ edges and local crossing number $ 4 $. (c) The graph $ S_8 $, a subgraph of $ D_8 $ with $ 13 $ edges and local crossing number $ 4 $. }
\label{fig:FigGraphsC4}
\end{figure}

Following similar ideas, it is possible to extend these constructions for larger values of $\ell$. These constructions would imply that $e_{\ell} (n) \ge \sqrt{\ell} n+\Theta(1)$. We avoid giving the details because we do not believe these constructions are optimal when $\ell \ge 7$. 

On the other hand, $ \crg (G)\geq {m^3}/({27n^2}) $ for any convex graph with $ n $ vertices and $ m \geq 3n$ edges \cite{SSSV04}. So if $ G $ is a subgraph of $ D_n $ such that $ \lc (G)=\ell $ and $ e(G)=e_\ell(n) $, then 
\begin{equation}\label{Ineq:CrossingLemma1}
\crg(G)\geq \frac{e_\ell(n)^3}{27n^2}.
\end{equation}
Also, because each edge of $ G $ is crossed at most $ \ell $ times, 
\begin{eqnarray}\label{Ineq:CrossingLemma2}
\crg(G)\leq \frac{e_\ell(n)\ell}{2}.
\end{eqnarray}
Inequalities (\ref{Ineq:CrossingLemma1}) and (\ref{Ineq:CrossingLemma2}) imply that
$
e_\ell(n) \leq \sqrt{\frac{27\ell}{2}}n<3.675\sqrt{\ell}n
$, and so in general
\[
\sqrt{\ell} n+\Theta(1) \le e_{\ell}(n) \le 3.675\sqrt{\ell}n.
\]

\subsection{Upper bounds for $e_{\ell}(n)$: settling the cases $\ell \leq 4$}\label{sect:exact_e}
In this section, we prove Theorem \ref{theorem:epsilons} (restated below), which settles the values of $e_{\ell}(n)$ for $\ell \leq 4$. Similar results for $ \ell=0 $ and 1 were proved in \cite{BKV03}. Theorem 3 in \cite{BKV03} proves that the maximum number of edges in a plane subgraph of $ G_n $ (instead of $ D_n $) is $ 2n-3 $. This is equivalent to $ e_0(n)=n-3 $ in Theorem \ref{theorem:epsilons}, as the sides of the polygon are included as possible edges, but their proof is different. Theorem 8 in \cite{BKV03}, however, is somewhat different from our result $  e_1(n)=\frac{3}{2}(n-3)+\delta_1(n) $. It shows that the largest number of edges in a subgraph of $ G_n $ such that any two edges sharing a vertex are not both crossed by a common edge is $\floor*{\frac{5}{2}n-4}=n+\ceil*{\frac{3}{2}(n-3)}$. Recall that this result uses the sides of the polygon.

\ExactValuesE*


\begin{proof}
    For convenience, we define $\delta_0(n)=0$ for all $n\geq 3$. For $0\leq \ell \leq 4$, we need to prove that $e_\ell(n)=C_\ell(n-3)+\delta_\ell(n)$, where $C_0=1, C_1=3/2, C_2=2, C_3=9/4$, and $C_5=5/2$. The individual proofs for the different values of $\ell$ considerably increase in difficulty as $\ell$ increases, but they are almost contained one in the next. Presenting these five proofs separately would be too repetitive and thus we decided to present them together using the parameter $\ell$. What this means is that early in the proof we conclude with the cases $\ell=0$ and 1; a bit further with cases $\ell = 2$ and $3$; and the rest of the proof is only for the case $\ell=4$.

     As before, we label the vertices of $ G $ by $ v_1,v_2,\cdots , v_n$ in counterclockwise order along the boundary of the underlying polygon of $ D_n $. Clearly, $e_{\ell}(3)=0$ for any nonnegative integer $\ell$, $e_0(4)=1$, and $e_{\ell}(4)=2$ for $1 \leq \ell\leq 4$. So assume that $ 0\leq \ell \leq 4 $ and $n\geq 5$. The inequality $ e_{\ell}(n) \geq C_{\ell}(n-3)+\delta_{\ell}(n) $ holds by Theorem \ref{thm:lower bound}. To assert equality, we prove the inequality $e_\ell(n) \leq C_\ell(n-3)+\delta_\ell(n)$ by induction on $ n $. 	This can be verified for $ 5\leq n \leq \ell+3 $. Assume $ n\geq\ell+4 $ and let $G$ be a subgraph of $D_n$ with $n$ vertices and local crossing number at most $ \ell $.
    
    Figure \ref{fig:organization} presents the general organization of the rest of the proof, here are some details. The rest of the proof is divided into three cases. Case 1  finishes the proof (using induction) when $G$ has at least one uncrossed edge. This always happens when $\ell=0$ or $1$ concluding those cases. When all edges of $G$ are crossed, the situation is more complicated. In order to understand the behavior of the crossings in this case, we define the \emph{crossing graph} of $ G $, denoted by $ G^{\otimes} $, as the graph whose vertices are the edges of $ G $ and two vertices of $ G^{\otimes} $ are adjacent in $ G^{\otimes} $ if the corresponding edges in $ G $ cross. Whether the crossing graph $ G^{\otimes} $ has cycles (Case 2) or not (Case 3), turns out to be an important fork in the proof. Lemma \ref{lemma:nocycles3} is a stand alone result that shows that whenever  $G^{\otimes} $ has no cycles, the graph $G$ has few edges (at most $2n-6$ to be precise) settling Case 2. Case 3 is subdivided according to the size $j$ of the smallest cycle in $G^{\otimes}$: Case 3.1 when $j=3$, Case 3.2 when $j=4$ (which includes two long subcases that only apply when $\ell=4$ and are differed to the Appendix), and Case 3.3 when $j\geq 5$ (the cases $\ell=2$ and $3$ are concluded soon after this case starts, the rest of the proof is for $\ell=4$). In all these cases, our strategy is to find what we call a \emph{valid replacement} (formally defined later), which is a set of edges in $G$ that can be replaced by one of the graphs in the previous section ---obtaining a new graph $G'$--- without increasing the local crossing number or decreasing the number of edges and in such a way that $G'$ has an uncrossed edge. This new graph falls into Case 1, which allows us to conclude the proof.  

\begin{figure}
    \centering
    \includegraphics{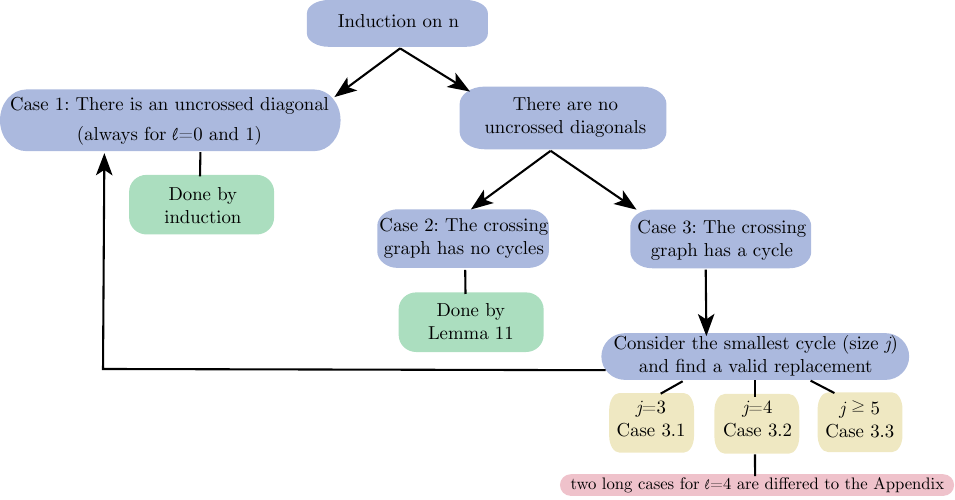}
    \caption{Organization of the proof of Theorem \ref{theorem:epsilons}.}
    \label{fig:organization}
\end{figure}
  
	\paragraph{Case 1.} Suppose $ G $ has an uncrossed diagonal, say $ v_1v_t $ with $ 3\leq t \leq n-1 $. Consider the two subgraphs $ G_1 $ and $ G_2 $ of $ G $ induced by the sets of vertices $ \{v_1,v_2,\ldots,v_t\} $ and $ \{v_t,v_{t+1},\ldots,v_{n-1},v_n,v_1\} $, respectively, removing the edge $ v_1v_t $ if present. Then $ G_1 $ and $ G_2 $ are subgraphs of $ D_t $ and $ D_{n+2-t} $, respectively,  with at most $ \ell $ crossings per edge. Therefore, each edge of $ G $ is also an edge of $ G_1 $ or $ G_2 $ except possibly for $ v_1v_t $, and by induction
	
	\begin{multline*}
	e(G) \leq 
	 e(G_1)+e(G_2)+1 \leq e_\ell(t) + e_\ell(n-t+2) + 1\\
	\leq
	\left\{
     \begin{array}{ll}
        C_\ell(t-3)+\delta_\ell(t)+C_\ell(n-t+2-3)+\delta_\ell(n-t+2)+1 &
        \begin{array}{l}
        \textup{if } 0\leq \ell \leq 3 \textup{ or }\\ \ell=4, 5\leq t \leq n-3,\vspace{.1in}
        \end{array}
        \\
        e_4(n-2) + e_4(4) + 1=\tfrac{5}{2}(n-3)+\delta_4(n-2) -2 &
        \textup{if } \ell=4, t=4 \textup{ or }t= n-2\vspace{.1in}\\
         e_4(n-1) + e_4(3) + 1=\tfrac{5}{2}(n-3)+\delta_4(n-1)  -\tfrac{3}{2} & \textup{if } \ell=4, t=3 \textup{ or }t=n-1,\vspace{.1in}\\
     \end{array}
   \right.
     \\
	= C_\ell(n-3)+\delta_\ell(n)+
		\left\{
     \begin{array}{ll}
        \delta_\ell(t)+\delta_\ell(n-t+2)-\delta_\ell(n)-C_\ell+1 &
        \begin{array}{l}
        \textup{if } 0\leq \ell \leq 3 \textup{ or }\\ \ell=4, 5\leq t \leq n-3,\vspace{.1in}
        \end{array}
        \\
         \delta_4(n-i)-\delta_4(n) -\tfrac{i+1}{2}& 
         \begin{array}{l}
         \textup{if } \ell=4, i=1 \textup{ or }2,\\ \{t,n+2-t\}=\{2+i,n-i\}
        \end{array}
     \end{array}
      \right.
	\\
	\end{multline*}
	Note that $n\geq 8$ when $\ell=4$ and so, for $i=1$ or $2$, $\delta_4(n-i)-\delta_4(n)\leq 3/2-0\leq (i+1)/2$. For the remaining cases, the inequality $ \delta_\ell(t)+\delta_\ell(n-t+2)\leq \delta_\ell(n)+C_\ell-1 $ can be easily verified.
	
Case 1 always holds for $ \ell=0 $ and $ n\geq 4 $. We claim that Case 1 also always holds for $ \ell=1$ and $ n\geq 5$. Indeed, we can assume without loss of generality that for some indices $ 1<i_1<i_2<i_3<i_4\leq n $ the edges $ v_{i_1}v_{i_3} $ and $ v_{i_2}v_{i_4} $ are in $ G $, that is, vertex $ v_1 $ does not participate in this crossing. In this case, the diagonal $ v_{i_4}v_{i_1} $ cannot be crossed by any edge in $ G $. This is because such an edge would also intersect $ v_{i_1}v_{i_3} $ or $ v_{i_2}v_{i_4} $, but $ v_{i_1}v_{i_3} $ and $ v_{i_2}v_{i_4} $ already cross each other and so they cannot cross any other edge. 


\underline{This concludes the proof for $ \ell=0$ and $\ell=1 $.} We now assume that $ 2\leq \ell \leq 4 $ and that $ G $ has no crossing-free diagonals. 

\paragraph{Case 2.} Suppose $ G^{\otimes} $ has no cycles and $ \ell\geq 2 $. Lemma \ref{lemma:nocycles3} below implies that $ e(G) \leq 2n-6 \leq C_\ell(n-3)+\delta_\ell(n) $.

\begin{lem}\label{lemma:nocycles3}
Let $ G $ be a subgraph of $ D_n $, $ n\geq 3 $. 
If $ G^{\otimes} $ has no cycles, then $ e(G)\leq 2n-6 $.
\end{lem}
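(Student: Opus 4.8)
The plan is to exploit the single structural consequence of $G^{\otimes}$ being acyclic: a forest is bipartite. Since $G^{\otimes}$ has no cycles it is a forest, hence bipartite, so its vertex set — which is exactly the edge set of $G$ — admits a bipartition $E(G)=A\sqcup B$ in which every edge of $G^{\otimes}$ joins $A$ to $B$. By the definition of the crossing graph, every crossing of $G$ is such an edge, so no two edges lying in $A$ cross, and likewise no two edges lying in $B$ cross. In other words, both $A$ and $B$ are crossing-free subgraphs of $D_n$.

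First I would record the elementary fact that a set of pairwise non-crossing diagonals of a convex $n$-gon has at most $n-3$ elements: any such set can be extended to a triangulation of the polygon, which uses exactly $n$ sides and $n-3$ diagonals, so a non-crossing family of diagonals has size at most $n-3$. (This is the content of $e_0(n)=n-3$, but it is classical and independent of Theorem~\ref{theorem:epsilons}, so no circularity arises.) Since $G\subseteq D_n$ uses only diagonals, both $A$ and $B$ are families of pairwise non-crossing diagonals, and the fact applies to each.

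Applying the bound to $A$ and to $B$ separately gives $|A|\le n-3$ and $|B|\le n-3$, and adding these yields $\crg$-free pieces totaling
\[
e(G)=|A|+|B|\le 2(n-3)=2n-6,
\]
which is the desired inequality. The degenerate case $n=3$ is immediate, since $D_3$ has no edges and $2n-6=0$.

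There is essentially no hard step here; the only point requiring care is that the $2$-coloring is of the \emph{edges} of $G$ (the vertices of $G^{\otimes}$), not of the vertices of $G$, so that a proper $2$-coloring of $G^{\otimes}$ is precisely a partition of $E(G)$ into two crossing-free classes. For contrast, a more computational route — planarizing $G$ together with the $n$ polygon sides and applying Euler's formula with a face-length count — yields only $e(G)\le n-3+\crg(G)$, which combined with $\crg(G)\le e(G)-1$ is too weak to conclude; it is the bipartition argument that makes the bound $2n-6$ come out exactly.
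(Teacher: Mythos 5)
Your proof is correct, and it takes a genuinely different route from the paper's. The paper proves the bound by induction on $n$: since $G^{\otimes}$ is a forest it has a vertex of degree at most one, i.e.\ $G$ has an edge $uv$ crossed at most once; cutting $G$ along $uv$ into two smaller convex graphs $G_1$, $G_2$ with $n_1+n_2=n+2$ vertices and observing that only $uv$ and its at most one crossing partner are lost gives $e(G)\le (2n_1-6)+(2n_2-6)+2=2n-6$. (The paper in fact shows the stronger statement $e^*(n)=2n-6$ by also exhibiting an extremal construction, though only the upper bound is needed for the lemma.) You instead use the other standard consequence of acyclicity: $G^{\otimes}$ is bipartite, so $E(G)$ splits into two classes each of which is a pairwise non-crossing family of diagonals, hence each of size at most $n-3$. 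Both arguments are sound; yours is shorter and non-inductive, and it isolates the reusable fact that a graph whose crossing graph is $k$-colorable has at most $k(n-3)$ edges, while the paper's cut-along-a-lightly-crossed-edge induction is the same template it reuses elsewhere (e.g.\ Case 1 of the proof of Theorem~\ref{theorem:epsilons}). Your remark on non-circularity is also well taken: the bound $n-3$ on a non-crossing family of diagonals is the classical triangulation count and does not rely on Theorem~\ref{theorem:epsilons}, so invoking it inside a lemma that feeds into that theorem's proof is legitimate. One cosmetic slip: the phrase ``yields $\crg$-free pieces totaling'' reads oddly and should just say that summing the two bounds gives $e(G)=|A|+|B|\le 2n-6$.
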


\begin{proof}
Let $ e^*(n) $ be the maximum number of edges in a subgraph $ G $ of $ D_n $ such that there are no cycles in $ G^{\otimes} $. We actually prove the identity $ e^*(n) =2n-6$. The graph whose edges are $ v_1v_i $ and $ v_{i-1}v_{i+1}$ for $ 3\leq i \leq n-1 $ (Figure \ref{fig:best_e3_nocycles}a) has $ 2n-6  $ edges and satisfies the conditions of the lemma, showing that $ e^*(n) \geq 2n-6$. We prove that $ e^*(n) \leq 2n-6$ by induction on $ n $. The result is clearly true if $ n=3 $ or $ 4 $. Let $ G $ be a graph with the required properties with $ n\geq 5 $ vertices and with the maximum number of edges. Because there are no cycles in $ G^{\otimes} $, it follows that there must be an edge $ uv $ in $ G $ crossed at most once. Consider the subgraphs $ G_1 $ and $ G_2 $ of $ G $ on each side of $ uv $, each including the vertices $ u $ and $ v $ but not the edge $ uv $. Let $ n_1 $ and $ n_2 $ be the number of vertices of $ G_1 $ and $ G_2 $, respectively. Note that the graphs $ G_1 $ and $ G_2 $ inherit the conditions of the lemma from $ G $. By induction, and since $ n_1+n_2=n+2 $ and the only edges that are not part of either $ G_1 $ or $ G_2 $ are $ uv $ and the edge (if any) crossing $ uv $, 
	$$
	e(G)\leq e(G_1)+e(G_2)+2\leq 2n_1-6 +2n_2-6 +2=2n-6.
	$$
\end{proof}

\begin{figure}[h]
	\centering
	\includegraphics[width=0.8\linewidth]{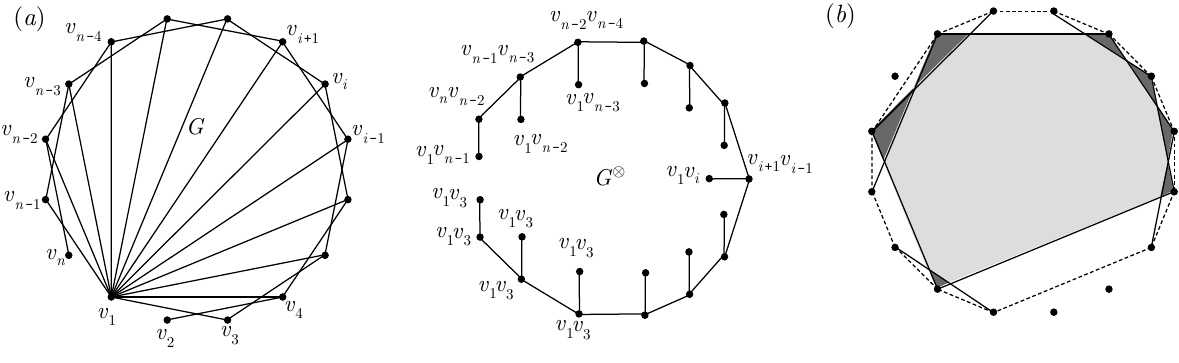}
	\caption{(a) Optimal graph for $ e^*(n). $ (b) The corners (dark shade) and the polygon (light shade) whose vertices are the crossings in the cycle $ C $.}
	\label{fig:best_e3_nocycles}
\end{figure}

\paragraph{Case 3.} Suppose that $ G^{\otimes} $ has a cycle and $ \ell\geq 2 $. In this case, we modify $ G $ to obtain a graph $ G' $ on the same vertex-set, 
with local crossing number at most $ \ell $, and such that $e(G)\leq e(G')\leq C_\ell(n-3)+\delta_\ell(n)$.

 Given a set of vertices $ U $, we denote by $ \conv(U) $ the convex hull of $ U $ and by $ \bd(U) $ the boundary of $ \conv(U) $. We say that $ G $ has a \emph{valid replacement} if there is a subset $ U $ of at least 3 vertices of $ G $  such that 
 \begin{equation}\label{eq:generatedplusborder}
  |\gen(U)|\leq C_\ell(|U|-3)+\delta_\ell(|U|)+|\nonedge(U)|,
 \end{equation}
 where $ \gen(U) $ is the set of edges of $ G $ intersecting the interior of $ \conv(U) $ and $ \nonedge(U) $ is the set of nonedges of $ G $ on $ \partial\conv(U) $. In this case, we say that $U$ \emph{generates} a valid replacement. 
 
 If $ G $ has a valid replacement generated by $ U $, then we obtain $ G' $ by
 \begin{itemize}
     \item removing $ \gen(U) $, leaving the interior of $ \conv(U)$ empty;
     \item adding the elements of $ \nonedge(U) $ as edges of $G'$; and
     \item adding a copy of a graph with vertex-set $ U $, local crossing number at most $ \ell $, and $ C_\ell(|U|-3)+\delta_\ell(|U|) $ edges (which we have proved exists).
 \end{itemize} 
 Note that $ G' $ has at least as many edges as $ G $ and has local crossing number at most $ \ell $  because the added edges are contained in $ \conv(U) $ and so they do not cross any edges of $G$ not in $\gen(U) $. 
 
If $ |U|=n $, then $e(G)=|\gen(U)|$  and so  the entire graph $G$ is being replaced by a graph $G'$ so that $e(G)\leq e(G')= C_\ell(n-3)+\delta_\ell(n)$. (In fact, since $|\nonedge(U)|=0$ when $|U|=n$, Inequality (\ref{eq:generatedplusborder}) already shows that $e(G)\leq C_\ell(n-3)+\delta_\ell(n)$.)

If $ |U|<n $, then there is at least one diagonal on $ \partial\conv(U) $ that is not a side of $G$. This diagonal becomes an uncrossed diagonal of $G'$ and so $ e(G)\leq e(G') \leq C_\ell(n-3)+\delta_\ell(n) $  by Case 1. 
For future reference, $ |U|<n $ whenever there is an edge in $ \gen(U) $ that crosses $ \partial\conv(U) $.

Let $ C $ be the smallest cycle in $ G^{\otimes} $ and $ j $ its size. $ C $ corresponds to a sequence of edges $ a_1,a_2,\ldots,a_j $ in $ G $ such that edge $ a_i $, with endpoints $ p_i $ and $ q_i $, crosses the edges $ a_{i-1} $ and $ a_{i+1} $ (the subindices are taken mod $ j $). The edges $ a_{i-1} $ and $ a_{i+1} $ can be incident to the same vertex or disjoint. In the first case, we say that the triangle formed by $ a_{i-1} $, $ a_i $, and $ a_{i+1} $ is a \emph{corner} of $ C $ (see Figure \ref{fig:best_e3_nocycles}b). Note that, by minimality of $ C $, the edges $ a_{i-1} $ and $ a_{i+1} $ can cross only if $ j=3 $, in which case $ C $ has no corners. Let $ V= \{p_1,p_2,\ldots,p_j,q_1,q_2,\ldots,q_j \}$, $ c $ be the number of corners of $ C $, $ E=\{a_1,a_2,\ldots,a_j\} $, $ D $ be the set of edges of $ G $ not in $ E $ that cross at least one edge in $ E $, $E'=E\cup D$, $ n' =|V|$, and $ e'=|E'| $. Note that the edges of $ G $ that are not in $ E' $ cannot cross $\conv(V)$. 

If $ e'\leq C_\ell(n'-3)+\delta_\ell(n') $, then $ V $ generates a valid replacement with $ |\gen(V)|=|E\cup D|=e' $. We now analyze under which circumstances we can guarantee that 
\begin{equation}\label{eq:need}
e'\leq C_\ell(n'-3)+\delta_\ell(n')
\end{equation} 
and therefore the existence of  a valid replacement. Note that $ n'=2j-c $ and $ |D|\leq j(\ell-2) $ because each edge in $ E $ already crosses two other edges in $ E $ and so it can only cross at most $ \ell -2 $ edges in $ D $. Then $ e'\leq j(\ell-1) $.

\paragraph{Case 3.1} Suppose $ j=3 $. Then $ c=0 $ and thus $ n'=6 $ and $ e'\leq 3(\ell-1)\leq 3C_\ell+\delta_\ell (6) = C_\ell(n'-3)+\delta_\ell (n')$.

\paragraph{Case 3.2} Suppose $ j=4 $. Figure \ref{fig:Casej4N} shows all possible cycles $ C $ formed by 4 edges in $ G $. They satisfy that $ 6\leq n'\leq 8 $ and $ e'\leq 4(\ell-1) $. It can be verified that Inequality (\ref{eq:need}) holds for $ 2\leq \ell\leq 4 $ and $ 6 \leq n' \leq 8 $ except when $ \ell=4 $, $ n'=7 $, and $ e'=12 $ (Figure \ref{fig:Casej4N}b); or $ \ell=4 $, $ n'=6 $, and $ 10\leq e' \leq 12 $ (Figure \ref{fig:Casej4N}c). (In all other cases $ C_\ell(n'-3)+\delta_\ell (n')\geq 4(\ell-1) $.) These remaining cases (all for $ \ell =4 $) involve a long and careful case analysis. The details are included in Appendix \ref{sect:case3.3}.

\begin{figure}[h]
	\centering
	\includegraphics[width=.7\linewidth]{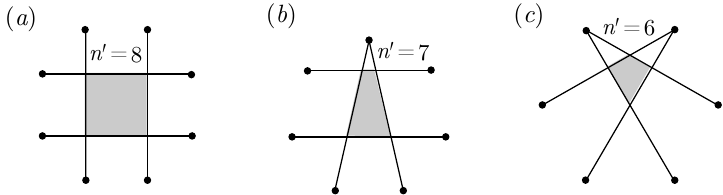}
	\caption{Possible cycles $ C $ with  $ j=4 $.}
	\label{fig:Casej4N}
\end{figure}

\paragraph{Case 3.3} Suppose $ j\geq 5 $.  The subgraph of $ G $ whose edge set is $ E $ has vertices of degree 2, the corners, and vertices of degree 1, the leaves. The edges of $ E $ are of three types: \emph{type 2} edges join two leaves, \emph{type 1} edges join a leaf and a corner, and \emph{type 0} edges join two corners. Let $ t_2, t_1, $ and $ t_0 $ be the number of edges in $ E $ of type 2, 1, and 0, respectively. By minimality of $ C $, any edge in $ E $ crosses at most one edge in $ D $ and any edge in $ E $ joining two corners can only be crossed by edges in $ E $. That is, edges of type 0 are not crossed by edges in $ D $; and each edge of type 2 or 1 is crossed by at most $ \ell-2 $ edges in $ D $. Then $ |D|\leq (\ell-2)(t_2+t_1)$. Note that $ t_1\leq 2c $ because each corner is incident to at most two edges of type 1. Also $2t_2+t_1=n'-c$ (all vertices of degree 1 in the graph induced by $ C $ counted in two different ways) and $ n'=2j-c $ (the sum of the degrees in the graph induced by $ C $ is $ 2\cdot c +1\cdot (n'-c)=2j$). So  $2t_2+t_1=2(j-c) $ and
\begin{eqnarray}
e'&=&|D\cup E|=|D|+j\leq (\ell-2)(2t_2+t_1)-(\ell-2)t_2+j\nonumber\\
&=&2(\ell-2)(j-c)-(\ell-2)t_2+j=(2\ell-3)j-2(\ell-2)c-(\ell-2)t_2.
\end{eqnarray}

Thus $ (2\ell-3)j-2(\ell-2)c-(\ell-2)t_2\leq C_\ell(n'-3)+\delta_\ell(n')=C_\ell(2j-c-3)+\delta_\ell(2j-c) $ guarantees (\ref{eq:need}). This is equivalent to 
\begin{equation}\label{eq:need2all}
(2C_\ell-2\ell+3)j+(2\ell-4-C_\ell)c-3C_\ell+(\ell-2)t_2+\delta_\ell(2j-c) \geq 0.
\end{equation}

If $ \ell=2 $, Inequality (\ref{eq:need2all}) becomes 
\begin{equation}\label{eq:need_ell2}
3j-2c-6+\delta_2(2j-c)\geq 0.
\end{equation}
Since $ j\geq 5 $, $ j\geq c $, and $ \delta_2(2j-c)\geq 0 $, then $3j-2c-6+\delta_2(2j-c)\geq (j-5)-2(j-c)+\delta_2(10-c)-1\geq 0$ because $j-5=j-c= \delta_2(10-c)=0$ never happens ($j=5=c$ implies $\delta_2(10-c)=\delta_2(5)=1$).

If $ \ell=3 $, Inequality (\ref{eq:need2all}) becomes 
\begin{equation}\label{eq:need_ell3}
\tfrac{1}{4}(6j-c-27)+t_2+\delta_3(2j-c)\geq 0.
\end{equation}
Since $ j\geq c $, $ t_2\geq 0 $, and $ \delta_3(2j-c)\geq -\frac{1}{4} $, then $\frac{1}{4}(6j-c-27)+t_2+\delta_3(2j-c)\geq \frac{1}{4}(6j-j-27-1)=\frac{1}{4}(5j-28)\geq 0$ for $ j\geq 6 $. If $ j=5 $, Inequality (\ref{eq:need_ell3}) becomes $ \frac{1}{4}(3-c)+t_2+\delta_3(10-c)\geq 0$, which holds because $ \frac{1}{4}(3-c)+\delta_3(10-c)\geq 0 $ for any $ 0\leq c \leq 5. $

\underline{This concludes the proof for $ \ell=2 $ and $ \ell=3 $.} 

If $ \ell=4 $, Inequality (\ref{eq:need2all}) becomes 
\begin{equation}\label{eq:need2}
\tfrac{3}{2}(c-5)+2t_2+\delta_4(2j-c)\geq 0.
\end{equation}
Inequality (\ref{eq:need2}) holds for  $c\geq 5 $ as $ t_2 $ and $ \delta_4 (2j-c) $ are nonnegative. Note that  $$ 5\leq j=\tfrac{1}{2}(2c+t_1+2t_2)\leq \tfrac{1}{2}(2c+2c+2t_2)=2c+t_2. $$ Then $\tfrac{3}{2}(c-5)+2t_2+\delta_4(2j-c)\geq \tfrac{3}{2}(c-5)+2(5-2c)+\delta_4(2j-c)=\delta_4(2j-c)-\tfrac{5}{2}(c-1)\geq 0$ for $c=0$ or 1. When $2\leq c \leq 4$, there are possible values of $t_2$ and $j\geq 5$ for which Inequality (\ref{eq:need2}) might not hold:
\begin{itemize}
\item If $ c=4 $, Inequality (\ref{eq:need2}) becomes $ 2t_2-\frac{3}{2}+\delta_4(2j-4) \geq 0 $ with $ \delta_4(2j-4)=\frac{3}{2} $ or $\frac{1}{2} $ if $ j $ is odd or even, respectively. The only case in which (\ref{eq:need2}) does not hold is when $ j $ is even and $ t_2=0 $. Since $5\leq j \leq 2c+t_2=8$, then $ j=6 $ or $ j=8 $. Figure \ref{fig:CaseC4} shows the only possibilities for $ C $.
\begin{figure}[h]
\centering
\includegraphics[width=0.5\linewidth]{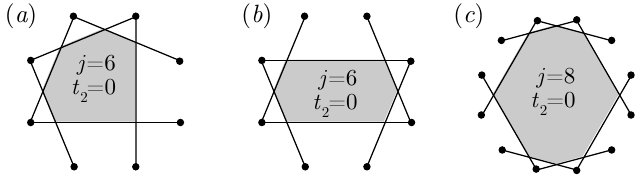}
\caption{Remaining possibilities in Case 3.3 with $ \ell =4 $ and  $ c=4. $}
\label{fig:CaseC4}
\end{figure}

\item If $ c=3 $, Inequality (\ref{eq:need2}) is equivalent to $ 2t_2-3+\delta_4(2j-3) \geq 0 $ with $ \delta_4(2j-3)=1 $ if $ j $ is odd and $ \delta_4(2j-3)=0 $ if $ j $ is even. So the only cases in which (\ref{eq:need2}) does not hold are when $ j $ is odd and $ t_2=0 $, or when $ j $ is even and $ t_2=0 $ or 1. 
Since $5\leq j \leq 6+t_2$, then $ j=5 $ and $t_2=0$; or $ j=6 $ and $t_2=0$ or $1$. The only possibilities for $ C $ are shown in Figure \ref{fig:CaseC3}.
\begin{figure}[h]
\centering
\includegraphics[width=0.5\linewidth]{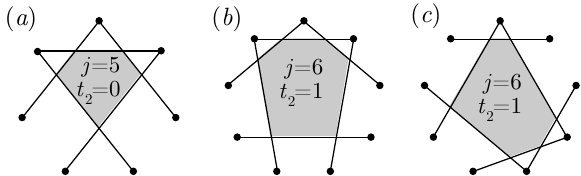}
\caption{Remaining possibilities in Case 3.3 with $\ell=4$ and $ c=3. $}
\label{fig:CaseC3}
\end{figure}
\item If $ c=2 $, Inequality (\ref{eq:need2}) becomes $ 2t_2-\frac{9}{2}+\delta_4(2j-2) \geq 0 $ and $ \delta_4(2j-2)=\frac{1}{2}$ or  $\frac{3}{2}$. The only cases in which (\ref{eq:need2}) does not hold is when $ t_2=0 $ or 1. Since 
$ 5\leq j\leq 4+t_2 $, then $ t_2=1 $ and $ j=5 $. Figure \ref{fig:CaseC2} shows the only possibility for $ C $.
\begin{figure}[h]
\centering
\includegraphics[width=0.15\linewidth]{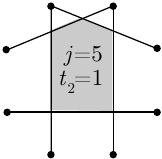}
\caption{Remaining possibilities in Case 3.3 with $\ell=4$ and $ c=2. $}
\label{fig:CaseC2}
\end{figure}
\end{itemize} 
Table \ref{table:cases} lists 
the cases where $ j\geq 5 $ and Inequality (\ref{eq:need}) is not necessarily satisfied, including the values of $ n'$ and $e' $. Note that by minimality of $ C $, each edge in $ D $ crosses exactly one edge in $ E $ and can be crossed by up to three other edges of $ G $. The tick marks in each figure indicate the possible crossings of edges in $ D $ with edges in $ E $. Note that all such crossings, except at most one of them in Figures  \ref{fig:CaseC3}a and \ref{fig:CaseC2}, must happen in order for Inequality (\ref{eq:need}) to fail. All crossings with type 1 edges must happen exactly at the indicated place. However, the two crossings with type 2 edges could happen on the same side of the edge (although Table \ref{table:cases} shows one tick mark per side).

\begin{table}[h]
\begin{center}
\scalebox{0.7}{
\begin{tabular}{|c|c|c|c|c|c|c|c|}
   \hline 
& 
\includegraphics[width=0.14\linewidth]{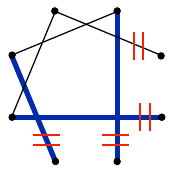} &
\includegraphics[width=0.14\linewidth]{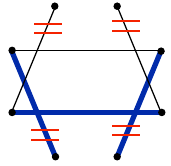} &
\includegraphics[width=0.14\linewidth]{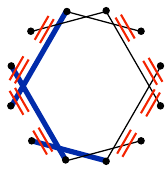} &
\includegraphics[width=0.14\linewidth]{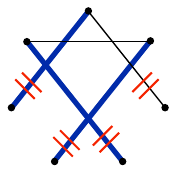} &
\includegraphics[width=0.14\linewidth]{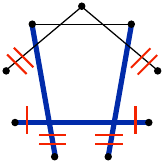} &
\includegraphics[width=0.14\linewidth]{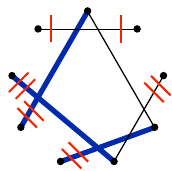} &
\includegraphics[width=0.14\linewidth]{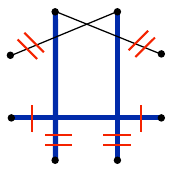}
  \\
   \hline 
Case & Figure \ref{fig:CaseC4}a & Figure \ref{fig:CaseC4}b & Figure \ref{fig:CaseC4}c & Figure \ref{fig:CaseC3}a & Figure \ref{fig:CaseC3}b & Figure \ref{fig:CaseC3}c  & Figure \ref{fig:CaseC2}  \\
   \hline 
$ n' $ & 8 & 8 & 12 & 7 & 9 & 9  & 8  \\
   \hline 
$ \frac{5}{2}(n'-3)+\delta_4(n') $ & 13 & 13 & 23 & 11 & 15 & 15  & 13  \\
   \hline 
$ e'  $ & $ 6+8=14 $ & $ 6+8=14 $ & $ 8+16=24 $ &  \begin{tabular}{@{}c@{}c@{}}$ 5+7=12 $\\or \\ $ 5+8=13 $ \end{tabular}& $ 6+10=16 $ & $ 6+10=16 $  & \begin{tabular}{@{}c@{}c@{}}$ 5+9=14 $ \\or\\ $ 5+10=15 $ \end{tabular}  \\
\hline
$ n_1 =n'+2$ & 10 & 10 & 14 & 9 & 11 & 11  & 10  \\
   \hline 
$ \frac{5}{2}(n_1-3)+\delta_4(n_1) $ & 19 & 19 & 29 & 15 & 21 & 21  & 19  \\
   \hline
$ e_1 \leq e'+3 $ & 17 & 17 & 27 & 15 or 16& 19 & 19  & 17 or 18 \\
\hline
\end{tabular}
}
\end{center}
\caption{ Remaining cases for $\ell=4$ and $ j\geq 5 $.}
\label{table:cases}
\end{table}

Consider an edge $ xy\in D $. If $ x\notin V $ and $ y\notin V $, let $ V_1=V\cup \{x,y\} $ and $ E_1=\gen(V_1) $, that is, $ E_1 $ is the union of $ E' $ and any edges crossing $ xy $. Let $ n_1=|V_1| =n'+2$ and $ e_1=|E_1|\leq e'+3 $. Whenever $ e_1\leq \frac{5}{2}(n_1-3)+\delta_4(n_1) $,  $ V_1 $ generates a valid replacement. Note that in all cases shown in Table \ref{table:cases}, except for Figure \ref{fig:CaseC3}a with $ e'=13 $, $ e_1\leq e'+3\leq \frac{5}{2}(n_1-3)+\delta_4(n_1) $.  We treat Figure \ref{fig:CaseC3}a with $ e'=13 $ separately at the end of the section. For all other cases (including Figure \ref{fig:CaseC3}a with $ e'=12 $), we argue that such an edge $xy$ always exists, that is, it is not possible for each edge in $ D $ to be incident to at least one vertex in $ V $. In each of these cases, there are three consecutive edges $ a_{i-1} $, $ a_i $, and $ a_{i+1} $ in $ C $ (a possible choice is highlighted in Table \ref{table:cases}) that satisfy the following conditions, refer to Figure \ref{fig:LastPart1}a: (1) $ a_{i-1} $ and $ a_{i+1} $ do not have vertices in common (in fact we only need $ q_{i-1} \neq p_{i+1} $), (2) $ a_{i-1} $ and $ a_{i+1} $ are of type 1, (3) $ a_{i-1} $ is crossed by at least one edge in $ D $, call it $ b $, and (4)  $ a_{i+1} $ is crossed by two edges in $ D $, call them $ c $ and $ c' $.
\begin{figure}[h]
\centering
\includegraphics[width=.9\linewidth]{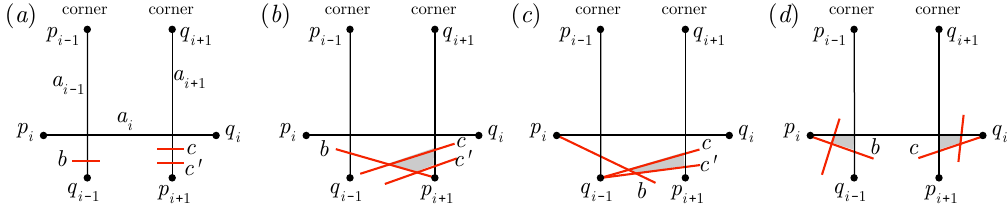}
\caption{Three consecutive edges in $ E $. Tick marks represent edges in $ D $.}
\label{fig:LastPart1}
\end{figure}

By minimality of $ C $, there are no triangles or quadrilaterals in $G^{\otimes} $. If each edge in $ D $ were incident to at least one vertex in $ V $, then  $ b $ would be incident to $ p_i $ or $ p_{i+1} $. But $ b $ is not incident to $ p_{i+1} $ (Figure \ref{fig:LastPart1}b), because $ b $, $ c, c' $, and $ a_{i+1} $ would form a quadrilateral (or a triangle if $c$ and $c'$ cross) in $ G^{\otimes} $. So $ b $ is incident to $ p_i $. If both $ c $ and $ c' $ are incident to $ q_{i-1} $ (Figure \ref{fig:LastPart1}c), then $ c,c',b $, and $ a_{i+1} $ would form a quadrilateral. But if $ c $ is incident to $ q_i $ (Figure \ref{fig:LastPart1}d), one of the edges crossing $ a_i $ would form a quadrilateral with either $ a_i,a_{i-1} $, and $ b $ or with $ a_i,a_{i+1} $, and $ c $. (The argument still applies if the roles of $ a_{i-1} $ and $ a_{i+1} $ are reversed.)

In Figure \ref{fig:CaseC3}a with $ e'=13 $, all edges in $ D $ shown by tick marks in Table \ref{table:cases} are present. If two edges $ wx $ and $ yz $ in $ D $ are not incident to vertices in $ V $, then let $ V_2=V\cup \{w,x,y,z\}$ and $ E_2 $ be the union of $ E' $ and any edges crossing $ wx $ or $ yz $. Then $ |V_2|=10 $ or $11 $ and $ |E_2|\leq e' +6=19$, and thus $ V_2 $ generates a valid replacement. Finally, the two edges in $ D $ circled in Figure \ref{fig:specialcasej5}a cannot be incident to vertices in $ V $, because a quadrilateral would be formed in $ G^{\otimes} $ (Figures \ref{fig:specialcasej5}b-c).

\begin{figure}[h]
\centering
\includegraphics[width=.5\linewidth]{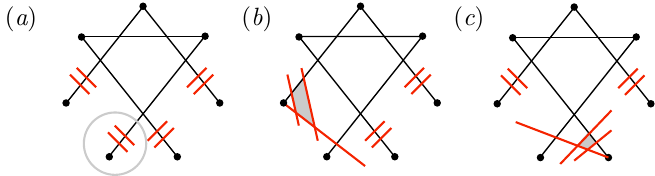}
\caption{Two edges in Figure \ref{fig:CaseC3}a that are not incident to vertices in $ V $.}
\label{fig:specialcasej5}
\end{figure}

\appendix
\section{Proof of remaining cases for $ j=4 $ and $ \ell=4 $ in Theorem \ref{theorem:epsilons}}\label{sect:case3.3}

Figure \ref{fig:Casej4} shows all remaining cases for $ j=4 $ and $ \ell=4 $: Figure \ref{fig:Casej4}a with $ e'=12 $ (all tick marks represent edges in $ D $) and Figure \ref{fig:Casej4}b with $ 10\leq e'\leq 12 $ (one or two tick marks could be missing).

\begin{figure}[h]
\centering
\includegraphics[width=.45\linewidth]{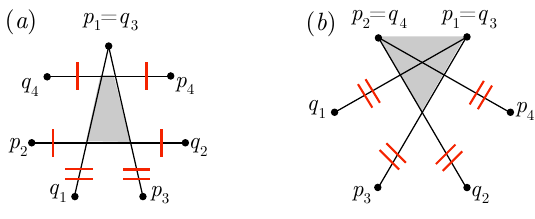}
\caption{Remaining cases for $ j=4 $ and $ \ell=4 $. (a) $ e'=12 $. (b) $ 10\leq e' \leq 12 $.}
\label{fig:Casej4}
\end{figure}

We use the following observation several times along the proof.

\begin{lem}\label{obs:boundary}
The set of vertices $ V $ in Figure \ref{fig:Casej4}a generates a valid replacement whenever there is at least one nonedge in $ \bd (V) $. The set of vertices $ V $ in Figure \ref{fig:Casej4}b generates a valid replacement whenever there are at least $ e'-9 $ nonedges in $ \bd (V) $.
\end{lem}

\begin{proof}
 In Figure \ref{fig:Casej4}a, $ |V|=n'=7 $, $ |\gen(V)|=e'=12 $, and assume that $ |\nonedge(V)|\geq 1$. Then $$ e'=12=\frac{5}{2}(7-3)+1+1\leq C_4(n'-3)+\delta_4(n')+|\nonedge(V)|. $$ 
 In Figure \ref{fig:Casej4}b, $ |V|=n'=6 $, $ |\gen(V)|=e' $, and assume that $ |\nonedge(V)|\geq e'-9$. Then $$ e'=\frac{5}{2}(6-3)+\frac{3}{2}+(e'-9)\leq C_4(n'-3)+\delta_4(n')+|\nonedge(V)|. $$ 
 Both cases satisfy (\ref{eq:generatedplusborder}), thus guaranteeing a valid replacement generated by $ V $.
 \end{proof}


\paragraph{Consider Figure \ref{fig:Casej4}a.} Because $ e'=12 $, each edge in $ D $ crosses exactly one edge in $ E $ and so it does not cross the shaded region. If there is an edge $ xy\in D $ with $ x\notin V $ and $ y\notin V $, let $ V_1=V\cup \{x,y\} $ and $ E_1=\gen(V_1) $, that is, $ E_1 $ is the union of $ E' $ and any edges crossing $ xy $. Then $ |V_1| =9$ and $ |E_1|\leq e'+3=15$. So $ V_1 $ generates a valid replacement. Similarly, if there is an edge $ xy\in D $ with $ x\notin V $ and $ y\in V $ that crosses at least two other edges in $ D $, let $ V_1=V\cup \{x\} $ and $ E_1 =\gen(V_1)$. Then $ |V_1| =8$ and $ |E_1|\leq e'+1=13$ (because there is at most one  edge crossing $ xy $ that is not in $ E\cup D $). So $ V_1 $ generates a valid replacement. So assume that any edge in $ D $ is incident to at least one vertex in $ V $ and that any edge incident to exactly one vertex in $ V $ crosses at most one edge in $ D $. 

\begin{figure}[h]
	\centering
	\includegraphics[width=.6\linewidth]{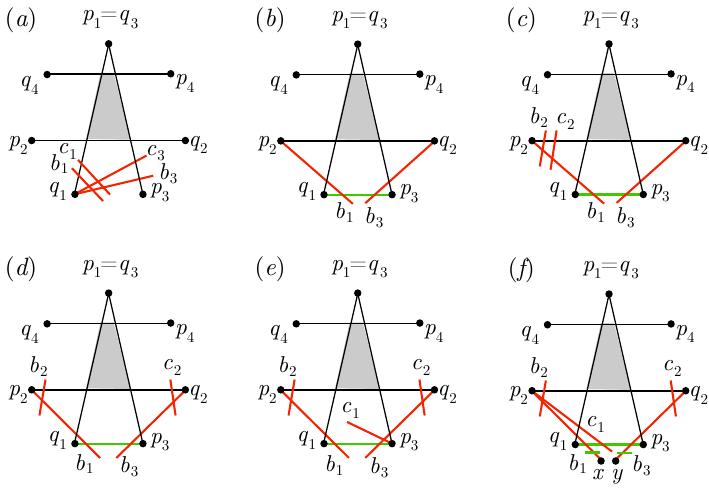}
	\caption{Completing the case corresponding to Figure \ref{fig:Casej4}a.}
	\label{fig:Casej4_a}
\end{figure}

Let $ b_i $ and $ c_i $ be the two edges in $ D $ crossing $a_i=p_iq_i $, $ 1\leq i \leq 4 $. Then each of $ b_1 $ and $ c_1 $ must be incident to $ p_2 $ or $ p_3 $ (or both); and each of $ b_3 $ and $ c_3 $ must be incident to $ q_1 $ or $ q_2 $ (or both). Suppose first that $ b_3 $ and $ c_3 $ are both incident to $ q_1 $. (Figure  \ref{fig:Casej4_a}a.) Then  $ b_1 $ and $ c_1 $ cross $ b_3 $ and $ c_3 $ and at least one of $ b_1 $ or $ c_1 $ is incident to exactly one vertex of $ V $ contradicting the assumptions. Then we can assume that $ b_3 $ is incident to $ q_2 $ and not to $ q_1 $; and (by symmetry) $ b_1 $ is incident to $ p_2 $ and not to $ p_3 $. (Figure \ref{fig:Casej4_a}b.) Note that the diagonal $ q_1p_3 $ is not a side (it is crossed by $ b_1 $ and $ b_3 $). If $ q_1p_3 $ is a nonedge, then $ V $ generates a valid replacement by Lemma \ref{obs:boundary}. Assume that $ q_1p_3 $ is an edge. Note that $ b_2 $ must cross $ b_1 $ or $ b_3 $ depending on where $ b_2 $ crosses $ a_2 $. The same applies to $ c_2 $. But both $ b_2 $ and $ c_2 $ cannot simultaneously cross $ b_1 $ (or $ b_3 $) as this would contradict our hypotheses ($ b_1 $ is incident to exactly one vertex in $ V $ and it would be crossed by two other edges in $ D $, Figure \ref{fig:Casej4_a}c). Then assume that $ b_2 $ crosses $ b_1 $ and $ c_2 $ crosses $ b_3 $.  (Figure \ref{fig:Casej4_a}d.) Similarly, $ c_1 $ cannot be incident to $ p_3 $ (Figure \ref{fig:Casej4_a}e), as then $ b_3 $ (which is incident to exactly one vertex in $ V $) would be crossed by $ c_1 $ and $ c_2 $. Thus $ c_1 $ is incident to $ p_2 $ and not to $ p_3 $. Since $ b_1 $ and $ c_1 $ satisfy the same conditions so far, we can assume without loss of generality that $ b_1 $ and $ b_3 $ do not share any vertices (Figure \ref{fig:Casej4_a}f). Say that $ b_1=p_2x $ and $ b_3=q_2y $ for some vertices $ x $ and $ y $ not in $ V $. Let $ V_1=V\cup \{x,y\} $ and $ E_1=\gen(V_1) $. Then $ |V_1| =9$ and $ |E_1|\leq e'+3=15$ because $ q_1p_3 $ crosses both $ b_1 $ and $ b_3 $, and so there is at most one more edge not in $ E\cup D$ crossing $ b_1 $ and one more crossing $ b_3 $. So $ V_1 $ generates a valid replacement.


\paragraph{Consider Figure \ref{fig:Casej4}b.} As before, let $ b_i $ and $ c_i $ be the edges in $ D $ (if they exist) crossing $a_i=p_iq_i $, $ 1\leq i \leq 4 $. If only one edge in $ D $ crosses $a_i$, we call it $b_i$. We assume all of the following conditions:
\begin{itemize}
	\item $ 10\leq e' \leq 12 $,
	
	\item there is no set of four edges in $ G $ crossing like in Figures \ref{fig:Casej4N}a-b, (if there were such a set, we would use that cycle instead of $ C $ to guarantee a valid replacement),
	
	\item there are at most $ e'-10 $ nonedges in $ \bd (V) $ (if there were at least $ e'-9 $ such nonedges, Lemma \ref{obs:boundary} would guarantees a valid replacement),
	
	\item no edge in $ D $ crosses the shaded region.
	
	Indeed, if such edges exist and they are incident to $ p_1 $ or $ p_2 $, then we replace $ C $ by the two edges in $ E' $ incident to $ p_1 $ closest to $ p_1p_2 $ and  the two edges in $ E' $ incident to $ p_2 $ closest to $ p_1p_2 $. Any edge crossing the shaded region after this replacement is not incident to $ p_1 $ or $ p_2 $ and so it would either create a triangle in $G^{\otimes} $ or 4 edges crossing as in Figure \ref{fig:Casej4N}b.

\end{itemize}

We consider three cases according to how the edges in $ D $ cross the edges in $ E $.

\paragraph{Case 1.} There are at least two edges in $ D $ each crossing more than one edge in $ E $.

Note that such edges must be incident to $ p_1 $ or  $p_2$. Say that $ ux \in D$ and $ vy \in D$ with $ \{u,v\}\subseteq\{p_1,p_2\} $ are crossed by more than one edge in $ E $. (See Figure \ref{fig:Casej4_b2}.) Then $ e'=10 $ and $ p_3q_2 $ is an edge ($ p_3q_2 $ is not a side because it is crossed by  $ ux $; and it is not a nonedge because it is on $\bd(V)$ and $ e'-10=0 $). So there is at most one edge not in $ E' \cup\{p_3q_2\}$ crossing $ ux $ and at most one crossing $ vy $. Let $ V_1=V\cup \{x,y\}$ and $ E_1=\gen(V_1) $, the union of $ E' $ and any other edges crossing $ ux $ or $ vy $. Then $ |V_1|=7 $ or $ 8 $, $ |E_1|\leq 12 $ or $ 13 $, respectively. In all cases, except when $ |V_1|=7 $ and $ |E_1| =12$, $ V_1 $ generates a valid replacement.  Assume that $ |V_1|=7 $ and $ |E_1| =12$. In this case $ x=y $ (Figure \ref{fig:Casej4_b2}c) and there is one edge not in $ E' \cup\{p_3q_2\}$ simultaneously crossing $ ux $ and $ vy $. This edge is incident to $ p_3 $ or $ q_2 $ as otherwise it would form a triangle or a quadrilateral like that in Figure \ref{fig:Casej4N}b. Suppose $ p_3w $ is such edge. Since $ e'=10 $, there is an edge $ b_3 $ in $ D $ crossing $ a_3 $. But then $ b_3 $ also crosses $ p_3w $ (otherwise $ b_3,a_3, vy $, and $ a_4 $ would form a quadrilateral like that in Figure \ref{fig:Casej4N}b) and so there is at most one edge not in $ E_1 $ crossing $ p_3w $. Let $ V_2=V_1\cup \{w\} $ and $ E_2=\gen(V_2) $, the union of $ E_1 $ and any other edges crossing $ p_3w $. Then $|V_2|=8 $ and $ |E_2|\leq 12+1=13 $. So $ V_2 $ generates a valid replacement.

\begin{figure}[h]
	\centering
	\includegraphics[width=.7\linewidth]{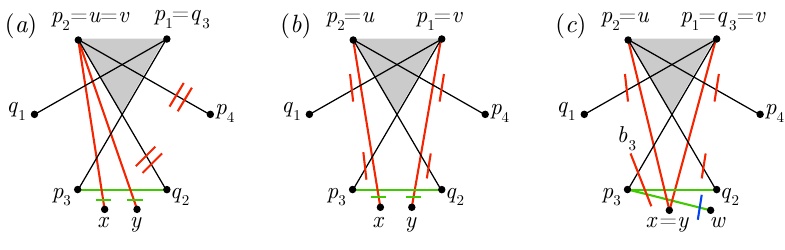}
	\caption{Two edges $ ux $ and $ vy $ in $ D $ cross more than one edge in $ E $.}
	\label{fig:Casej4_b2}
\end{figure}

\paragraph{Case 2.} There is exactly one edge in $ D $  crossing more than one edge in $ E $.

As before, such an edge must be adjacent to $ p_1 $ or $ p_2 $. Assume by symmetry that $ p_1x $ is such edge. Then $ n'=10 $ or $ 11 $. So there is at least one edge in $ D $ crossing $ a_3 $. Note that no edge in $ D $ crosses $ p_1x $ because it would create a triangle in $ G^{\otimes} $ or a quadrilateral like those in Figures \ref{fig:Casej4N}a-b. 

\begin{figure}[h]
	\centering
	\includegraphics[width=.9\linewidth]{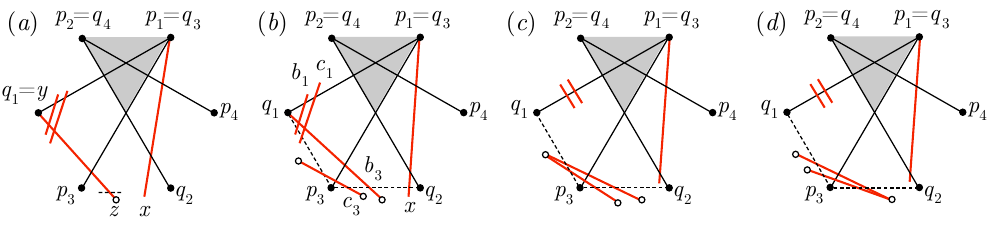}
	\caption{Possible cases for Figure \ref{fig:Casej4}b with exactly one edge in $ D $ crossing more than one edge in $ E $ and (a) exactly one edge in $ D $ crossing $ a_3 $ or (b-d) two edges in $ D $ crossing $ a_3 $ and $ n_1=9 $.}
	\label{fig:Casej4_1_9}
\end{figure}

If $b_3=yz$ is the only edge crossing $ a_3$, then $ n'=10 $ and $ a_1 $ is crossed by two edges $ b_1 $ and $ c_1 $ in $ D $. If $ y $ and $ z $ are not in $ V $, then let $ V_1=V\cup\{y,z\} $. Then $ \gen(V_1)$ is the union of $ E' $ and any other edges crossing $ yz $. Then $ |V_1|=8 $, $|\gen(V_1)|\leq 10+3=13 $, and so $ V_1 $ generates a valid replacement. Then we can assume that $ y=q_1 $ and $ z\notin V $ ($b_3$ is not incident on $q_2$ because it does not cross $ p_1x $, Figure \ref{fig:Casej4_1_9}a). Thus $ b_1 $ and $ c_1 $ also cross $ b_3 $, that is, $ b_3 $ is crossed by at most one edge not in $ E' $. Let $ V_2=V\cup\{z\} $. Then $|V_2|=7 $, $ |\gen(V_2)|\leq 10+1=11 $, and so $ V_2 $ generates a valid replacement. 

Assume now that there are two edges $ b_3 $ and $ c_3 $ in $ D $ crossing $ a_3 $. Let $ V_1 $ be the union of $ V $ and any vertices incident to $ b_3 $ or $ c_3 $. Then $ \gen(V_1) $ is the union of $ E' $ and any edges crossing $ b_3 $ or $ c_3 $. Let $ n_1=|V_1| $ and $ e_1=|\gen(V_1)| $. Then $ 8\leq n_1 \leq 10 $ ($b_3$ and $c_3$ are not incident to $q_2$ because they do not cross $p_1x$) and $ e_1\leq e'+6\leq 17 $. If $ n_1=10 $, then $ V_1 $ generates a valid replacement because $ e_1\leq 17<19=\frac{5}{2}(10-3)+\delta_4(10) $. Similarly, there is a valid replacement if $ n_1=9 $ and $ e_1\leq 15 $ or if $ n_1=8 $ and $ e_1\leq 13 $.

If $ n_1=9 $ ($b_3$ and $c_3$ span exactly 3 new vertices as in Figures \ref{fig:Casej4_1_9}b-d) and $ e_1=16 $ or $17$, then $ b_3 $ and $ c_3 $ simultaneously cross each of the diagonals $ q_1p_3 $ and $ p_3q_2 $, one of which must be an edge because  $ e'\leq 11 $. Therefore, $ e_1=16 $ and thus $e'=11 $ and there are exactly $ 5 $ edges not in $ E' $ crossing $ b_3$ or $c_3 $.  In particular, there are two edges $ b_1 $ and $ c_1 $ in $ D $ crossing $ a_1 $, and no edge in $ D $ crosses $ b_3 $ or $ c_3 $. (This takes care of Figure \ref{fig:Casej4_1_9}b.) Thus $ b_1 $ and $ c_1 $ are not incident to $ p_3 $ and so at least one of them, say $ b_1 $, is incident to a vertex not in $ V_1 $. Let $ V_2$ be the union of $ V_1 $ and the vertices spanned by $ b_1 $. Then $ |V_2|\geq 9+1=10 $, $ |\gen(V_2)|\leq e_1+3\leq 16+3=19 $, and so $ V_2 $ generates a valid replacement.


\begin{figure}[h]
\centering
\includegraphics[width=1\linewidth]{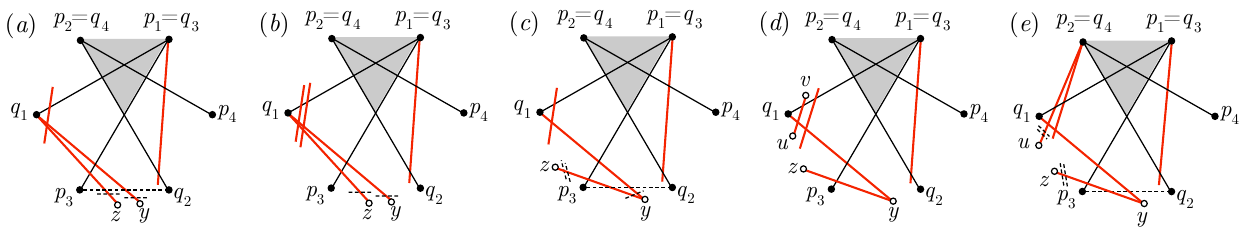}
\caption{Possible cases for Figure \ref{fig:Casej4}b with one edge in $ D $ crossing more than one edge in $ E $ and $ n_1=8 $.}
\label{fig:Casej4_1_8}
\end{figure}

If $ n_1=8 $ and $ e_1\geq 14 $, then there are two vertices $ y $ and $z $ not in $ V $ such that either $ b_3=q_1y $ and $ c_3=q_1z $ (Figure \ref{fig:Casej4_1_8}a-b), or $ b_3=q_1y $ and $ c_3=zy $ (Figures \ref{fig:Casej4_1_8}c-e). If only one edge $ b_1 \in D $ crosses $ a_1 $ (Figures \ref{fig:Casej4_1_8}a and \ref{fig:Casej4_1_8}c), then $ e'=10 $ and so $ p_3q_2 $ is an edge. But then $ e_1\leq 10+3=13 $. If two edges $ b_1 $ and $ c_1 $ cross $ a_1 $, then at most one edge not in $ E' $ crosses $ b_3 $. In Figure \ref{fig:Casej4_1_8}b, the same holds for $ c_3 $ but then $e_1\leq 11+2=13 $. In Figure \ref{fig:Casej4_1_8}d, the edge $ c_3 $ could be crossed by as many as 3 edges not in $ E' $ and so $ e_1\leq 11+3=14 $. Since $e_1\geq 14$, $ c_3 $ is crossed by 3 edges not in $ E' $, $ e'=11$, and $e_1=14$. This means that $ b_1 $ and $ c_1 $ do not cross $ c_3 $. If $ b_1=uv $ and both $ u $ and $ v $ are not in $ V $ (Figure \ref{fig:Casej4_1_8}d), then let $ V_2=V\cup\{u,v\} $. Thus $ |V_2|=8 $, $ |\gen(V_2)|\leq 11+2=13 $, and so $ V_2 $ generates a valid replacement. Since $ b_1 $ and $c_1$ cannot be incident to $ p_3 $ (as they would cross $ c_3 $), then assume that $ b_1 $ and $c_1$ are incident to $ p_2 $. Let $ b_1=p_2u $ with $ u \notin V_1 $ as in Figure \ref{fig:Casej4_1_8}e ($ c_1=p_2z$ is possible).

If $ V_3=V_1\cup \{u\} $, then $ E_3=\gen(V_3) $ is the union of $ E_1 $ and any other edges crossing $ b_1 $, $ |V_3|=9 $, and $ |E_3|\leq 14+2=16 $. If $ |E_3| \leq 15$, then $ V_3 $ generates a valid replacement. If $ |E_3|=16 $, then no edge simultaneously crosses $b_1$ and $c_3$. In particular $ q_1p_3 $ is not an edge and so $ p_3q_2 $ is an edge. Moreover, one of the other two edges not in $E'$ crossing $c_3$ is incident to a vertex not in $V_3$, call this edge $b$. Let $ V_4 $ be the union of $ V_3 $ and any vertices spanned by $ b $. Then $ |V_4|\geq 10 $, $ |\gen(V_4)|\leq 16+3=19 $, and so $ V_4 $ generates a valid replacement.




\paragraph{Case 3.} Each edge in $ D $ crosses exactly one edge in $ E $.

We  consider 9 subcases, listed on Table \ref{table:subcases}, according to the numbers $ (r_1,r_3,r_2,r_4) $ of edges in $ D $ crossing $ (a_1,a_3, a_2,a_4) $, respectively.

\begin{table}[h]
\begin{center}
\begin{tabular}{|c|c||c|c||c|c|}
\hline
Subcase $ (r_1,r_3,r_2,r_4)  $ & $ e' $&  Subcase $ (r_1,r_3,r_2,r_4)  $ & $ e' $ & Subcase $ (r_1,r_3,r_2,r_4)  $ & $ e' $\\
\hline
$ (2,2,2,2) $ & 12 & $ (2,2,2,0) $  & 10 & $ (2,1,2,1) $ & 10\\
\hline
$ (2,2,2,1) $ & 11 & $ (2,2,1,1) $  & 10 & $ (2,2,0,2) $ & 10\\
\hline
$ (2,2,1,2) $ & 11 & $ (1,2,2,1) $  & 10 & $ (2,1,1,2) $ & 10\\
\hline
\end{tabular}
\end{center}
\caption{ Division into subcases according to how the edges in $ D $ cross the edges in $ E $.}
\label{table:subcases}
\end{table}

\begin{figure}[h]
\centering
\includegraphics[width=.8\linewidth]{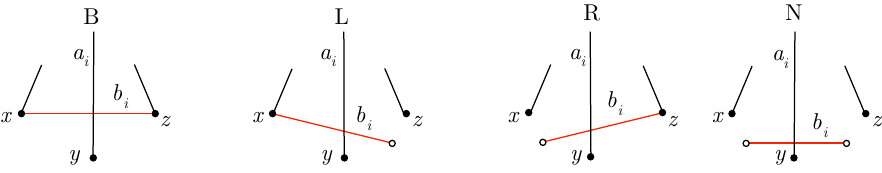}
\caption{Classification of edges in $ E $ crossed by one edge in $ D $.}
\label{fig:Casej4_types1}
\end{figure}

\paragraph{Edge-classification} We classify the edges of $ E $ according to how they are crossed by the edges of $ D $. Let $ a_i\in E $. 

\begin{itemize}
\item We say that $ a_i $ is of Type O if it does not cross edges in $ D $.

\item Figure \ref{fig:Casej4_types1} shows the possible types of $ a_i $ if it is crossed by exactly one edge $ b_i\in D $. The edge $ a_i $ is of type B, L, R, or N according to whether $ b_i $ is incident to \emph{both} $ x $ and $ z $, only $ x $ (the vertex directly to the \emph{left} of $y$ in $p_2,q_1,p_3,q_2,p_4,q_3$), only $ z $ (the vertex to the \emph{right}), or \emph{neither} $ x $ nor $ z $. 

\item Figure \ref{fig:Casej4_types2} shows the possible types of $ a_i $ if it is crossed by two edges $ b_i $ and $ c_i $ in $ D $: Types $LL$ (left-left), $LB$ (left-both), $LN$ (left-none), $RR$ (right-right), $RB$ (right-both), $RN$ (right-none), $BN$ (both-none), $NN$ (none-none). Note that type $ BB $ is not possible because we would have a double edge, and type $ LR $ is not possible because $ b_i,c_i, $ and $ a_i $ would form a triangle in $ G^{\otimes} $. The absence of triangles is $ G^{\otimes} $ also eliminates additional drawings of types LN, RN, BN, and NN.

\item We group the types of edges into four (non-disjoint) big groups: B-, which includes any of the  types B, LB, RB, and BN; R- including the types R, RB, RR, RN;  L- including the types L, LB, LL, and LN; and N- including the types N, LN, RN, BN and NN.
\end{itemize}

\begin{figure}[h]
\centering
\includegraphics[width=.8\linewidth]{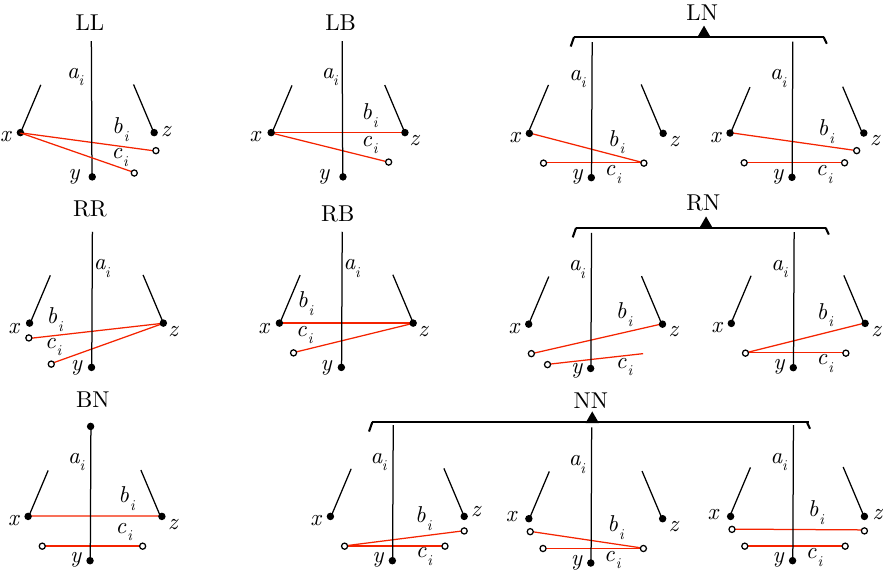}
\caption{Classification of edges in $ E $ crossed by two edges in $ D $.}
\label{fig:Casej4_types2}
\end{figure}

The following lemma identifies the incompatibility of certain types of edges.

\begin{lem}\label{obs:gen1} The graph $ G $ satisfies the following two conditions.
\begin{enumerate}
\item \label{obs:2middle_edges} The edges $ (a_3,a_2) $ cannot be of types (R- or B-,N- or R-) or (N- or L-,L- or B-).
\item \label{PNaa_PLaa} Any two consecutive edges in the list $ a_1,a_3,a_2,a_4 $ cannot be of types (RR or RB,N- or R-) or (N- or L-,LL or LB).
\item \label{obs:2B2} Within the list $ a_1,a_3,a_2,a_4 $, an edge of type B- cannot be between two edges crossed by two edges in $ D $.
\end{enumerate}
\end{lem}

\begin{proof}
\begin{enumerate}
\item Whenever the edges $ (a_3,a_2) $ are of types (R-,N-), (R-,R-), (B-,N-), (B-,R-), (N-,L-), (N-,B-), (L-,L-), or (L-,B-) a quadrilateral like the one in Figure \ref{fig:Casej4N}b is formed (Figure \ref{fig:Casej4_gen2}a).
\item Whenever two consecutive edges $a_i$ and $a_j$ in the list $ a_1,a_3,a_2,a_4 $ are of types (RR or RB,N- or R-) or (N- or L-,LL or LB), a quadrilateral like that in Figure \ref{fig:Casej4N}b is formed (Figure \ref{fig:Casej4_gen2}b).
\item Suppose $ a_i, a_j, $ and $ a_k $ are consecutive in the list, $ a_j $ is of type B-, and $ a_i $ and $ a_k $ are each crossed by two edges in $ D $.  Then $ b_j $ is crossed by $ b_i,c_i,a_j,b_k, $ and $ c_k $ (Figure \ref{fig:Casej4_gen2}c), which is impossible since any edge in $ G $ is crossed at most 4 times.
\end{enumerate}
\end{proof}

\begin{figure}[h]
\centering
\includegraphics[width=.9\linewidth]{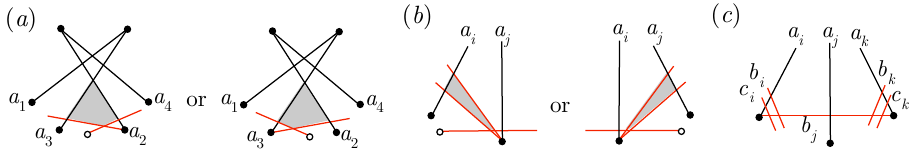}
\caption{Illustration for Lemma \ref{obs:gen1}. (a) Types (R- or B-,N- or R-) or (N- or L-,L- or B-) of ($a_3$, $a_2$) are not possible. (b) Types of any two consecutive edges (RR or RB,N- or R-) or (N- or L-,LL or LB) are not possible. (c) Types of any three consecutive edges (any 2, B-, any 2) are not possible.}
\label{fig:Casej4_gen2}
\end{figure}

The following lemma guarantees the existence of a valid replacement under certain conditions.

\begin{lem}\label{lem:valid_replacement}
	Suppose that there is a set of edges $ D'' $ in $ D $ incident to at least $ n'' $ vertices not in $ V $ and crossed by at most $ e'' $ edges not in $ E' $. If $ (5/2)(n''+3)+\delta_4(n''+6)\geq e'+e'' $, then there is a valid replacement.
\end{lem}

\begin{proof}
	Let $ D''\subseteq D $, $ V'' $ be the set of vertices not in $ V $ incident to an edge in $ D'' $, $ E'' $ be the set of edges not in $ E' $ crossing at least one edge in $ D'' $, $ |V''|=n'' $, and $ |E''|=e'' $. Then $ \gen(V\cup V'')=E'\cup E'' $. Indeed, any edge $ b $ in $ \gen(V\cup V'')-E' $ does not cross edges in $ E $ and so it does not cross sides of $\bd (V) $. So (in $ \bd (V\cup V'') $) $ b $ separates at least one vertex $ w $ in $ V'' $ from $ \bd (V) $, except perhaps by one of its endpoints. Because $ w\in V'' $, there is an edge $ c\in D'' $ incident to $ w $. Because $ c\in D'' \subseteq D $, then $ c $ crosses an edge in $ E $ and so $c$ also crosses $ \bd (V) $. Therefore, $ c $ must cross $ b $ and so $ b\in E'' $. Thus $ |V \cup V''|\geq n''+6 $ and	
	\begin{eqnarray}
		|\gen (V\cup V'')|=|E\cup E''|=e'+e''\leq \tfrac{5}{2}(n''+3)+\delta_4(n''+6)\nonumber\\
		\leq \tfrac{5}{2}(|V\cup V''|-3)+\delta_4(|V\cup V''|)+\nonedge(V\cup V'').\nonumber 
	\end{eqnarray}
	That is, $ V\cup V'' $ generates a valid replacement.
\end{proof}

\begin{cor}\label{cor:valid_replacement}
There is a valid replacement whenever there is a set $ D''\subseteq D $ incident to $ n'' $ vertices not in $V$ and crossed by $ e'' $ edges not in $E'$ satisfying one of the following conditions.
	\begin{table}[h]
		\begin{center}
		\begin{tabular}{|c|c|c|c|c|c|c|c|c|c|c|c|}
			\hline	$e'\leq$ & 12 & 12 & 12 & 11 & 11 & 11 & 11 & 10 & 10 & 10 & 10 \\
			\hline
			$n''\geq$ & 2 & 3 & 4 & 1 & 2 & 3 & 4 & 1 & 2 & 3 & 4 \\
			\hline				
			$e''\leq$ & 1 & 3 & 7 & 0 & 2 & 4 & 8 & 1 & 3 & 5 & 9\\
		    \hline	
		\end{tabular}
		\end{center}
	\end{table}
\end{cor}
Table \ref{table:compatibility11} shows Properties $ \PRRany $-$ \PV $ on $ G $ under which a valid replacement is guaranteed. The existence of these valid replacements are guaranteed by Corollary \ref{cor:valid_replacement}. Vertices not in $ V $ are indicated by empty vertices, the edges in $ D'' $ are those shown in the corresponding picture that are incident to an empty vertex. The edges in $ E'' $ are indicated by dashed lines. Bounds for the values of $ e',n'', $ and $ e'' $ are shown in the last three columns.

\begin{longtable}[t]
	   {|>{\centering\arraybackslash}m{.15in}
		|>{\centering\arraybackslash}m{.64in}
		|>{\centering\arraybackslash}m{.64in}
		|>{\centering\arraybackslash}m{.55in}
		|>{\centering\arraybackslash}m{2.2in}
		|>{\centering\arraybackslash}m{.2in}
		|>{\centering\arraybackslash}m{.23in}
		|>{\centering\arraybackslash}m{.23in}|}
\hline
P & 
Type of $ a_j $ & 
Type of $ a_i $ & 
Type of $ a_k $ & 
Figure & 
{\small $ e'\leq $} &
{\small $ n''\geq $} &
{\small $ e''\leq $} \\
\hline
\PRRany $ ^* $
&\begin{tabular}{@{}c@{}c@{}}  \\ RR \vspace{.05 in}\\  {\small any in}\vspace{-.05 in} \\ {\small Fig. \ref{fig:Casej4_types2}}\end{tabular} 
&\begin{tabular}{@{}c@{}c@{}}{\small any in}\vspace{-.05 in}\\ {\small Fig. \ref{fig:Casej4_types2}} \vspace{.05 in}\\  LL\\ \end{tabular} 
&
&\includegraphics[width=.8\linewidth]{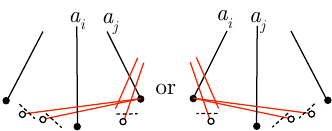} 
& 12 & 3 & 3\\
\hline
\begin{tabular}{@{}c@{}c@{}}  \PNNd \vspace{-.07 in}\\  \vspace{-.07 in} \\ \vspace{.08 in} \\\PNdN $ ^* $  \\\end{tabular}
&\begin{tabular}{@{}c@{}c@{}}NN\vspace{-.07 in}\\{\small vertex}\vspace{-.07 in}\\ {\small disjoint}\vspace{.08 in} \\ N-\end{tabular}
&&\begin{tabular}{@{}c@{}c@{}} \vspace{-.07 in}\\ \vspace{-.07 in}\\  \vspace{.08 in} \\N-\end{tabular}& \includegraphics[width=.8\linewidth]{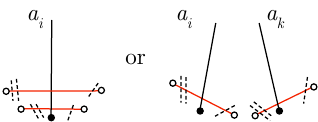} & 12 & 4 & 6 \\
\hline
\PNa
&\begin{tabular}{@{}c@{}c@{}}R-  or B-\\  \\ N-\end{tabular}&\begin{tabular}{@{}c@{}c@{}}N-\\   \\ L- or B-\end{tabular}& &\includegraphics[width=.8\linewidth]{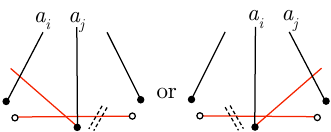} & 11 & 2 & 2 \\
\hline
\PanyLa
&\begin{tabular}{@{}c@{}c@{}}{\small any in}\vspace{-.05 in}\\ {\small Fig. \ref{fig:Casej4_types2}} \vspace{.05 in}\\  R- or B-\\ \end{tabular} 
&\begin{tabular}{@{}c@{}c@{}} \\L-  \vspace{.05 in}\\  R- \\ \end{tabular}
&\begin{tabular}{@{}c@{}c@{}} \\ L- or B- \vspace{.05 in}\\ {\small any in}\vspace{-.05 in} \\ {\small Fig. \ref{fig:Casej4_types2}}\end{tabular} 
&\includegraphics[width=.8\linewidth]{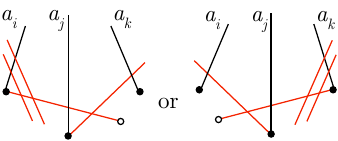} 
& 11 & 1 & 0\\
\hline
\PaRRa
& R- or B-&LL or RR& L- or B-
& \includegraphics[width=.8\linewidth]{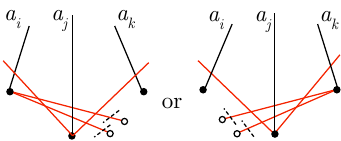} 
& 11 & 2 & 2\\
\hline
\PN 
& N-&   & 
& \includegraphics[width=.29\linewidth]{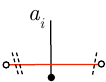} & 10 & 2 & 3\\
\hline
\PaLL
& B- or R- & LL or RR & 
 & \includegraphics[width=1\linewidth]{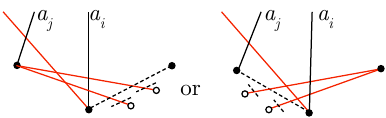} & 10 & 2 & 3\\
\hline
\PLLa 
& LL or RR & B- or L-& 
 & \includegraphics[width=1\linewidth]{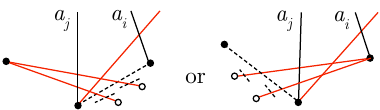} & 10 & 2 & 3\\
\hline
\PanyL
 &\begin{tabular}{@{}c@{}c@{}}{\small any in}\vspace{-.05 in}\\ {\small Fig. \ref{fig:Casej4_types2}}\vspace{.05 in} \\ R-\\ \end{tabular}
&\begin{tabular}{@{}c@{}c@{}}  \\ L- \vspace{.05 in} \\  {\small any in}\vspace{-.05 in} \\ {\small Fig. \ref{fig:Casej4_types2}}\end{tabular} 
&
&\includegraphics[width=.75\linewidth]{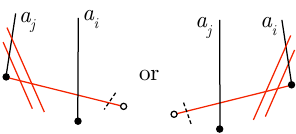} & 10 & 1 & 1\\
\hline
\PV
& B- or  R- & L- or R-  & B- or L- & \includegraphics[width=.8\linewidth]{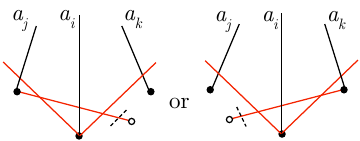} & 10 & 1 & 1\\
\hline
\caption{Properties $ \PRRany $-$ \PV $ guarantee the existence of a valid replacement. The edges $ a_i $, $ a_j $, and $ a_k $ are consecutive in $ \{a_1,a_3,a_2,a_4\} $, except for Property $ \PNdN $ where $ a_i $ and $ a_k $ could be any edges in $ \{a_1,a_2,a_3,a_4\} $ as long as the edges in $ D'' $ do not share vertices. In Property $ \PRRany $, note that at least one of the two edges crossing $ a_j $ is incident to a vertex not in $ V $, and this vertex could be above $a_j$.}
\label{table:compatibility11}
\end{longtable}
 



We now analyze each of the subcases in Table \ref{table:subcases}.

\paragraph{Subcase (2,2,2,2).} In this subcase $ e'=12 $ and by Lemma \ref{obs:boundary} we assume that there are at most 2 nonedges on $ \partial\conv(V) $.

By Lemma \ref{obs:gen1}.\ref{obs:2B2}, $ a_3 $ and $ a_2 $ cannot be of type B-. Property $\PRRany$ guarantees a valid replacement when at least one of the edges $ a_3 $ or $ a_2 $ is of type RR or LL. Then assume that $ a_3 $ and $ a_2 $ are of types LN, RN, or NN. By Lemma \ref{obs:gen1}.\ref{obs:2middle_edges}, only the cases when $ a_3 $ is of type LN or NN and $ a_2 $ is of type RN or NN are left. By Properties $ \PNdN $ and Lemma \ref{obs:gen1}.\ref{PNaa_PLaa}, the only cases left are when $ a_1 $ is of type LL or LB and $ a_4 $ is of type RR or RB. Since $ a_3 $ and $ a_2 $ are of type N-, Property $ \PNNd $ guarantees a valid replacement unless $ c_3 =xy $ and $ c_2=yz $ for some vertices $ x,y, $ and $ z $ not in $ V $. Assuming this situation and in order to avoid triangles or quadrilaterals like the one in Figure \ref{fig:Casej4N}a-b in $ G^{\otimes} $, any edges crossing $ a_3 $ or $ a_2 $ must be incident to $ y $, Figure \ref{fig:Case2222}a. 

\begin{figure}[h]
\centering
\includegraphics[width=.8\linewidth]{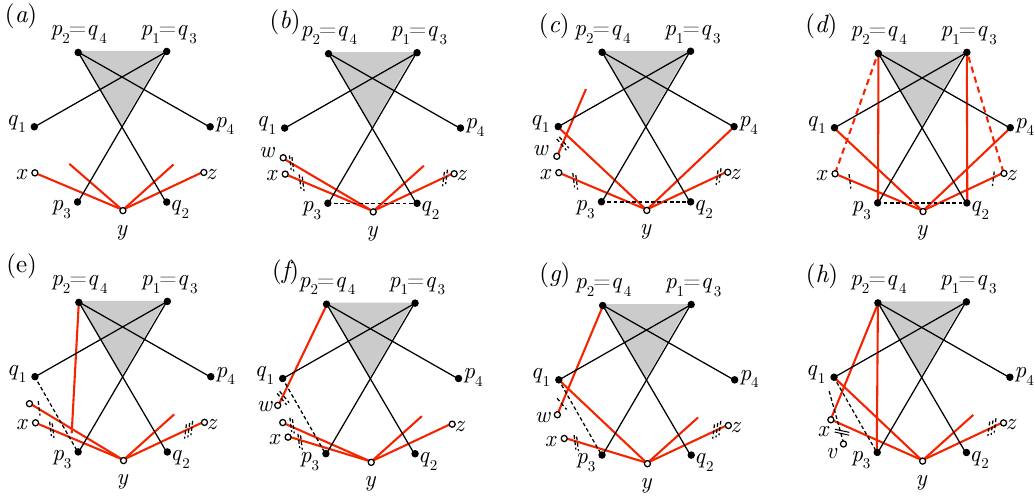}
\caption{Possible subgraphs for Subcase (2,2,2,2).}
\label{fig:Case2222}
\end{figure}

Because $ q_1p_3,p_3q_2, $ and $ q_2p_4 $ are not sides (they are crossed by $ c_3 $ or $ c_2 $), then at least one of them is an edge by Lemma \ref{obs:boundary}. Suppose first that $ p_3q_2 $ is an edge. If $ a_3 $ (or $a_2$) is of type NN, Figure \ref{fig:Case2222}b, then $ b_3=wy$ for some  vertex $ w\notin V\cup \{x,y,z\}$. Let $ D''=\{b_3,c_3,c_2\} $. These three edges are incident to $ w,x,y, $ and $ z $ and (because they are simultaneously crossed by $ p_3q_2 $) they are crossed by at most $ 1+2+2+2=7 $ edges not in $ E' $. So Corollary \ref{cor:valid_replacement} guarantees a valid replacement with $n"=4$ and $e''\leq 7$. So assume that $ a_3 $ and $ a_2 $ are of types LN and RN. If $ b_1 $ (or $b_4$) is incident to a vertex $ w \notin V\cup \{x,y,z\} $, Figure \ref{fig:Case2222}c ($w$ could be above $q_1$), then use Corollary \ref{cor:valid_replacement} with $ D''=\{b_1,c_3,c_2\} $, $n"=4$, and $e''\leq 7$. So assume that $ a_1 $ and $ a_4 $ are of types B-, Figure \ref{fig:Case2222}d. Corollary \ref{cor:valid_replacement} guarantees a valid replacement using $ D''=\{c_3,c_2\}, $ $n"=3$, and $e''\leq 3$. 

Now assume that $ q_1p_3 $ is an edge and $ p_3q_2 $ is a nonedge. Suppose first that $ a_3 $ is of type NN. If $ b_1 $ or $c_1$ crosses $ b_3 $ or $ c_3 $, let $ D''=\{b_3,c_3,c_2\} $, Figure \ref{fig:Case2222}e. Otherwise, $ b_1 $ (or $c_1$) does not share vertices with $b_3$ and $c_3$; let $ D''=\{b_1,b_3,c_3\} $ as in Figure \ref{fig:Case2222}f. In both cases $ D'' $ is incident to 4 vertices not in $ V $ and is crossed by at most 7 edges not in $ E' $. So Corollary \ref{cor:valid_replacement} guarantees a valid replacement. Suppose now that $ a_3 $ is of type LN. If $ b_1 $ or $c_1$ is incident to a vertex $ w\notin V \cup\{x\}$, Figure \ref{fig:Case2222}g, then use Corollary \ref{cor:valid_replacement} with $ D''=\{b_1,c_3,c_2\} $, $n"=4$, and $e''\leq 7$. Otherwise, we can assume that $ a_1 $ is of type LB and $ b_1=p_2x $. If $ q_1p_3 $ is the only edge not in $ D $ crossing $ xy $, then use Corollary \ref{cor:valid_replacement} with $ D''=\{c_3\} $, $n"=2$, and $e''\leq 1$. Otherwise, there is an edge $ uv $, with $ v\notin V $ crossing $ xy $, Figure \ref{fig:Case2222}h. Let $V''=V\cup \{v,x,y,z\}  $ and $ E''=\gen(V'') $, the union of $ E' $ and any other edges crossing $ xy,yz, $ or $ uv $. There are two edges not in $ E' $ crossing $ xy $ ($ q_1p_3 $ and $ uv $), at most three crossing $ yz $, and at most two crossing $ uv $.  Then $ |V''|=6+4=10 $, $ |E''|\leq 12+7=19 $, and thus $ V'' $ generates a valid replacement.

\paragraph{Subcases (2,2,2,1) and (2,2,1,2).} In these subcases $ e'=11 $. We can also assume that there are no edges of type NN. Indeed, suppose there is an edge $ a_i\in E $ of type NN. The edges $ b_i $ and $ c_i $ do not cross because they would form a triangle in $ G^{\otimes} $. Property $ \PNNd $ guarantees the existence of a valid replacement if $ b_i $ and $ c_i $ are disjoint.  Assume then that $ b_i=xy $ and $ c_i=xz $ for some vertices $ x,y, $ and $ z $ not in $ V $, Figure \ref{fig:Case2221}a. By Lemma \ref{obs:boundary}, we can assume that one of the diagonals in $ \bd(V) $ crossed by $ b_i $ is an edge. This edge simultaneously crosses $ b_i $ and $ c_i $ and thus $ b_i $ and $ c_i $ can be crossed by at most 5 edges not in $ E' $. If there is an edge $b_j$ in $D'$ incident to a vertex $w\notin V\cup\{x,y,z\}$, then use Corollary \ref{cor:valid_replacement} with $ D''=\{b_i,c_i,b_j\} $, $ n''\geq 4$,  and $e''\leq 5+3=8 $. Such an edge exists when $i=1$ (and by symmetry, when $i=4$) because one of the edges $a_2$ or $a_4$ is crossed by 2 edges in $D$. So assume that 
$i=3$ (symmetric argument if $i=2$), then $a_4$ must be of type B as in Figure \ref{fig:Case2221}b. Thus $a_2$ is crossed by two edges in $D$, both of them incident to $p_4$.  One of these edges must cross $b_3$ or $c_3$. In this case, use Corollary \ref{cor:valid_replacement} for $ D''=\{b_3,c_3\} $, $ n''\geq 3 $, and $ e''\leq 5-1=4 $.


\begin{figure}[h]
\centering
\includegraphics[width=.8\linewidth]{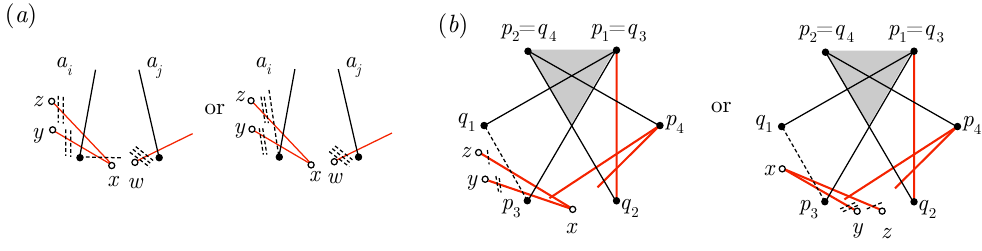}
\caption{(a) There are no edges of type NN in subcases (2,2,2,1) and (2,2,1,2), the vertices $x,y,z$ could be on the other side of $a_i$ and $w$ could be on the other side of $a_j$. (b) A possible subgraph for Subcase (2,2,2,1), the edge $q_1p_3$ could be replaced by the edge $p_3q_2$.}
\label{fig:Case2221}
\end{figure}

In \textbf{Subcase (2,2,2,1)}, $ a_3 $ cannot be of type B- by Lemma \ref{obs:gen1}.\ref{obs:2B2}, or RN by Lemma \ref{obs:gen1}.\ref{obs:2middle_edges}; and there is a valid replacement when $ a_3 $ is of type RR or LL by Property $ \PRRany $. So we assume that $ a_3 $ is of type LN and $b_3=q_1x$.  Then $ a_2 $ is not of type L-, or B- by Lemma \ref{obs:gen1}.\ref{obs:2middle_edges}, and there is a valid replacement when $ a_1 $ is of type N-, R-, or B- by Property $\PNa$. So we assume that $ a_1 $ is of type LL and $ a_2 $ is of type RR or RN; with $b_1=p_2y$ and $c_1=p_2z$. 
Because one of the edges crossing $ a_2 $, say $ b_2 $, is incident to a vertex $ w\notin V\cup\{x,y,z\}$, then Corollary \ref{cor:valid_replacement} using $ D''=\{b_1,c_1,b_3,b_2\} $, $ n''\geq 4 $, and $ e''\leq 8 $ guarantees a valid replacement. 

\begin{figure}[h]
\centering
\includegraphics[width=.8\linewidth]{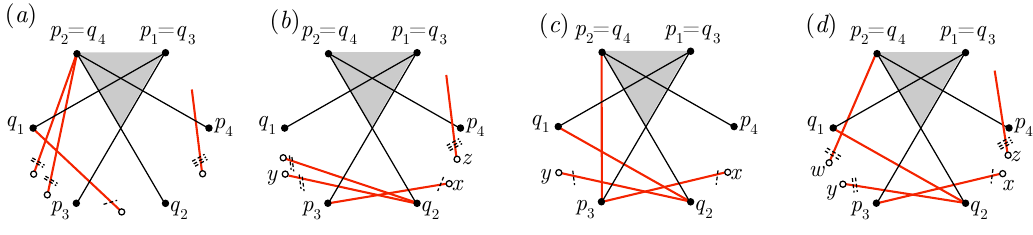}
\caption{Possible subgraphs for Subcase (2,2,1,2).}
\label{fig:Case2212}
\end{figure}

In \textbf{Subcase (2,2,1,2)}, $ a_3 $ cannot be of type RN, BN, or LB by Lemma \ref{obs:gen1}.\ref{obs:2middle_edges} and there is a valid replacement if $ a_3 $ is of type LL by Property $ \PRRany $. So we assume that $ a_3 $ is of type LN, RR, or RB.  

If $ a_3 $ is of type LN, then there is a valid replacement unless $ a_1 $ is of type LL by Property $ \PNa $. Since one of the edges crossing $ a_4 $, say $ b_4 $, is incident to a vertex not in $ V $, then there is a valid replacement by Corollary \ref{cor:valid_replacement} using $ D''=\{b_1,c_1,b_3,b_4\} $ ($ b_3 $ is incident to $ q_1 $), $ n''\geq 4 $, and $ e''\leq 8 $.

If $ a_3 $ is of type RR or RB, then $ a_2 $ must be of type L by Lemma  \ref{obs:gen1}.\ref{obs:2middle_edges}-\ref{obs:2B2}. Then $b_2=p_3x$ and $b_3=q_2y$ for some vertices $x$ and $y$ not in $V$. By Property $ \PanyLa $, a valid replacement exists if $ a_4 $ is of type L- or B-. So assume that $ a_4 $ is of type RR or RN, and thus one of the edges crossing $ a_4 $, say $ b_4 $, is incident to a vertex $z\notin V\cup\{x,y\} $. If $ a_3 $ is of type RR, then there is a valid replacement by Corollary \ref{cor:valid_replacement} using $ D''=\{b_3,c_3,b_2,b_4\} $, $ n''\geq 4 $, and $ e''\leq 8 $. If $ a_3 $ is of type RB, then there is a valid replacement when $ a_1 $ is of type RR or N- by Properties $ \PRRany $ and $ \PNa $, respectively. If $ a_1 $ is of type B-, then $ b_3 $ and $ b_2 $ are crossed by two edges in $ D' $ and so Corollary \ref{cor:valid_replacement} can be used with $ D''=\{b_3,b_2\} $, $ n''\geq 2 $, and $ e''\leq 2 $. So assume that $ a_1 $ is of type LL. Since one of the edges crossing $ a_1 $, say $ b_1 $, is incident to a vertex $w\notin V\cup\{x,y,z\} $, then there is a valid replacement by Corollary \ref{cor:valid_replacement} using $ D''=\{b_1,b_3,b_2,b_4\} $, $ n''\geq 4 $, and $ e''\leq 8 $.


\paragraph{All other Subcases.} In all the remaining subcases, $ e'=10 $, by Lemma \ref{obs:boundary} we assume that $ \partial \conv(V) $ is formed by sides and edges only, and by Property $ \PN $ we assume that there are no edges of type N-. The following corollary follows from  Properties $ \PN$-$ \PanyL $.
\begin{cor}\label{cor:22}
Let $ a_i $ and $ a_j $ be two consecutive edges in the list $ a_1,a_3,a_2,a_4 $, each crossed by two edges in $ D $. Then there is a valid replacement unless $ a_i $ and $ a_j $ are of types (in this order) LB-RB or LL-RR.
\end{cor}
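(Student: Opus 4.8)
The plan is to prove this by a finite case analysis driven entirely by Tables~\ref{table:compatibility} and~\ref{table:compatibility2}. Recall the standing assumptions of this block of subcases: $e'=10$, the boundary $\partial\conv(V)$ consists only of sides and edges (Lemma~\ref{obs:boundary}), and, by Lemma~\ref{lem:cond10}.\ref{obs: unattached_10}, every edge of $E$ is of one of the types O, B, L, R, BL, BR, LL, RR. Among these, an edge of $E$ is crossed by two edges of $D$ precisely when its type is BL, BR, LL, or RR. Hence, if $e_i$ and $e_j$ are consecutive in the list $e_1,e_3,e_2,e_4$ and each is crossed by two edges of $D$, the ordered pair of their types is one of $16$ possibilities.

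First I would run through these $16$ ordered pairs and, for every pair other than BL-BR and LL-RR, cite the applicable row of Tables~\ref{table:compatibility} and~\ref{table:compatibility2} to produce a valid replacement. The bookkeeping is organized by the group memberships, which, restricted to the four double types, read: B- $=\{$BL, BR$\}$, L- $=\{$BL, LL$\}$, R- $=\{$BR, RR$\}$, and N- contains none of them. For instance, a pair of the form (B- or R-, then LL or RR) is handled by Property~7; a pair of the form (LL or RR, then B- or L-) by Property~8; every pair whose second edge has type BL or LL by Property~5 or~9; and every pair whose first edge has type BR or RR by Property~6 or~10. Carrying out the enumeration, the $14$ pairs $(x,y)$ with $y\in\{$BL, LL$\}$ or $x\in\{$BR, RR$\}$, together with (BL, RR) (Property~7) and (LL, BR) (Property~8), are all covered, so the only ordered pairs of double types that escape Properties~5--10 are exactly BL-BR and LL-RR. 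This proves the corollary.

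The one point requiring care is consistency of the left/right and earlier/later orientation conventions: Properties~5--10 are each stated for a specifically oriented pair of consecutive edges, so for each ordered pair one must match $e_i$, $e_j$, and their left and right endpoints to the configuration in the corresponding figure. One should also check that the extra hypotheses behind Properties~7 and~8 (which invoke Lemma~\ref{obs:boundary}) are available, but these reduce exactly to the standing assumption that $\partial\conv(V)$ is made of sides and edges only. I do not expect a genuine obstacle here: the content that makes the two exceptional configurations BL-BR and LL-RR actually hard is deliberately left out of this corollary and is instead carried out by hand in Subcases~$(2,2,2,0)$, $(2,2,1,1)$, and~$(1,2,2,1)$, where the corollary is applied.
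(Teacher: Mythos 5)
Your proposal is correct and is essentially the argument the paper intends: the paper simply asserts that the corollary ``follows from Properties 5--11,'' and your enumeration of the $16$ ordered pairs of double types against Properties 5--10 (with the correct reading that the table's $e_j$-column is the earlier edge, as confirmed by how Property 6 is later applied to the $2,2,1$ substrings) verifies exactly that only BL-BR and LL-RR escape. The only cosmetic slip is the count ``$14$ pairs'' attached to the condition $y\in\{\mathrm{BL},\mathrm{LL}\}$ or $x\in\{\mathrm{BR},\mathrm{RR}\}$, which describes $12$ pairs (the total of $14$ is reached only after adding (BL,\,RR) and (LL,\,BR)); this does not affect the conclusion.
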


In \textbf{Subcase (2,2,2,0)}, there are three consecutive 2s. If $ a_1 $-$ a_3 $ are of types LB-RB or LL-RR, then $ a_3 $-$ a_2 $ are not of those types and thus a valid replacement is guaranteed by Corollary \ref{cor:22}. 

In \textbf{Subcases (2,2,1,1) and  (1,2,2,1)}, there is a 2,2,1 substring. By Corollary \ref{cor:22}, there is a valid replacement unless the first two of these three edges are of types LB-RB or LL-RR.  If the third edge is of type R- or L-, then Lemma \ref{obs:gen1}.\ref{PNaa_PLaa} and Property $ \PanyL $, respectively, guarantee a valid replacement. If the third edge is of type B, then LL-RR-B is covered by Property $ \PLLa $ and LB-RB-B is covered by Property $ \PV $. 
\begin{figure}[h]
\centering
\includegraphics[width=.5\linewidth]{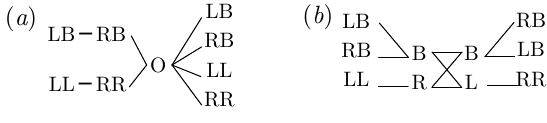}
\caption{Remaining possibilities for Subcases (2,2,0,2) and (2,1,1,2).}
\label{fig:Casej4_bd}
\end{figure}

In \textbf{Subcase (2,1,2,1)}, $ a_3 $ is not of type B by Lemma \ref{obs:gen1}.\ref{obs:2B2}. The remaining possibilities for $ a_3 $ are covered by Property $ \PanyL $.

\begin{figure}[h]
\centering
\includegraphics[width=.8\linewidth]{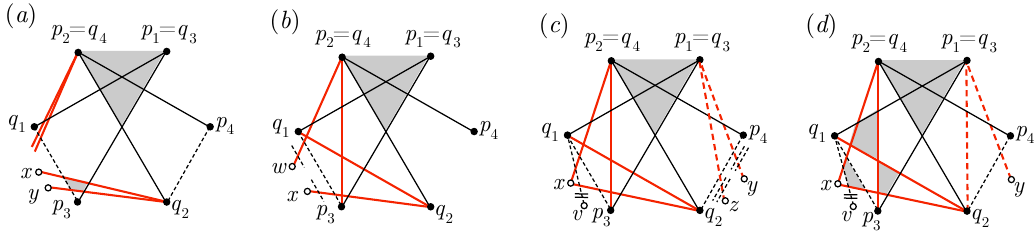}
\caption{Possible subgraphs for Subcase (2,2,0,2).}
\label{fig:Casej4_b}
\end{figure}

In \textbf{Subcase (2,2,0,2)}, by Property $\PN$ and Corollary \ref{cor:22}, the only remaining possibilities for $(a_1,a_3,a_2,a_4)$ are the left-to-right paths in Figure \ref{fig:Casej4_bd}a. Figure \ref{fig:Casej4_b}a corresponds to $ a_1 $-$ a_3 $ being of types LL-RR and so we assume that $ q_1p_3 $ is an edge. If $ q_1p_3 $ is the only edge not in $ E' $ crossing $ b_3 $ (or $ c_3 $), then use Corollary \ref{cor:valid_replacement} for $ D''=\{b_3\} $, $ n''\geq 1 $, and $ e''= 1 $. Otherwise, the quadrilateral in $ G^\otimes $ formed by the set of edges $E''= \{a_3,q_1p_3,b_3,c_3\} $ is like that in Figure \ref{fig:Casej4}b and is crossed by at least 6 other edges. So it must fall into one of the subcases in Table \ref{table:subcases}. We have shown the existence of a valid replacement for all subcases other than (2,2,0,2), (2,0,2,2), and (2,1,1,2). Since $ a_3 $ is crossed by $ a_2 $ and $ a_4 $, $ q_1p_3 $ is crossed by $ b_1 $ and $ c_1 $, and $ b_3 $ and $ c_3 $ are crossed by an edge other than $ a_3 $ and $ q_1p_3 $, then the quadrilateral cannot fall into any of these three subcases and so it has a valid replacement. 


Figures \ref{fig:Casej4_b}b-d correspond to $ a_1 $ and $ a_3 $ being of types LB-RB and so we assume that $ q_1p_3 $. Since $a_4$ is of type R- or L-, then $ q_2p_4 $ or $ p_4q_3 $, respectively, is also an edge. Let $ b_1=p_2w $ and $ b_3=q_2x $ with $ w $ and $ x $ not in $ V $. If $ w\neq x $  as in Figure \ref{fig:Casej4_b}b, use Corollary \ref{cor:valid_replacement} for $ D''=\{b_1,b_3\} $, $ n''\geq 2 $, and $ e''\leq 3 $. So assume that $ w=x $. If $ q_1p_3 $ is the only edge not in $ E' $ crossing $ b_1 $, then use Corollary \ref{cor:valid_replacement} for $ D''=\{b_1\} $, $ n''\geq 1 $, and $ e''\leq 1 $. So assume that there is another edge $ uv $ crossing $ b_1 $. Note that this edge must be incident to $q_1$ or $p_3$ in order to avoid a quadrilateral like that in Figure \ref{fig:Casej4N}b. So we assume that $v\notin V$ and $u=p_1$ or $u=q_3$. If $a_4$ is of type LL (or similarly RR) with $b_4=p_1y$ and $c_4=p_1z$, see Figure \ref{fig:Casej4_b}c, let $V'=V\cup \{v,x,y,z\}$. Then $|V'|=10$, $|\gen(V')|\leq 10+9=19$, and thus $V'$ generates a valid replacement. If $a_4$ is of type LB or RB, Figure \ref{fig:Casej4_b}d, then we can assume that the edge $uv$ is crossed by two edges not in $E'$ as otherwise the set $V'=V\cup \{v,x\}$ with $|V'|=8$, $|\gen(V')|\leq 10+3=13$ would generate a valid replacement. This is the only remaining possibility for which we have not yet found a valid replacement in this subcase and it will be analyzed together with the subcase (2,1,1,2), see Figure \ref{fig:Casej4_left}a.

\begin{figure}[h]
	\centering
	\includegraphics[width=1\linewidth]{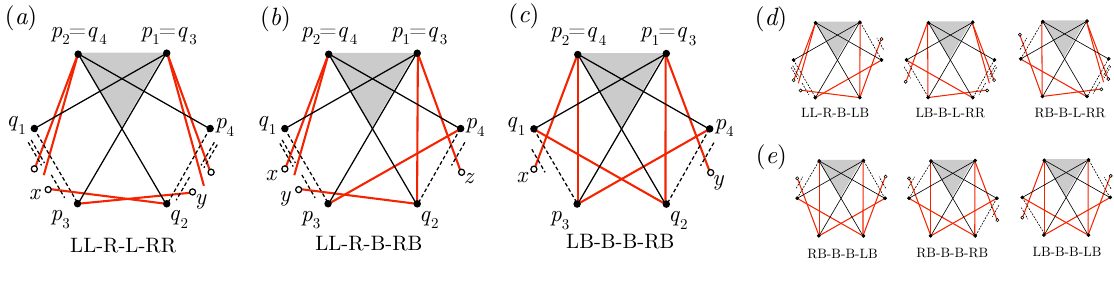}
	\caption{Possible subgraphs for Subcase (2,1,1,2).}
	\label{fig:Casej4_e}
\end{figure}

In \textbf{Subcase (2,1,1,2)}, by Properties $ \PN $-$ \PV $ and Lemma \ref{obs:gen1}.\ref{PNaa_PLaa}, the only remaining possibilities are the left-to-right paths in Figure \ref{fig:Casej4_bd}b. All these cases are illustrated in Figure \ref{fig:Casej4_e}. The arguments for Figures \ref{fig:Casej4_e}d and \ref{fig:Casej4_e}e are analogous to those of Figures \ref{fig:Casej4_e}b and \ref{fig:Casej4_e}c, respectively. 

In Figure \ref{fig:Casej4_e}a, let $ x $ and $ y $ be the vertices not in $ V $ incident to $ b_3 $ and $ b_2 $, respectively. Then we can assume that $ b_1 $ is incident to a vertex not in $ V\cup\{x\} $; and $ b_4 $ is incident to a vertex not in $ V\cup\{y\}  $. By Lemma \ref{obs:boundary}, the diagonals $ q_1p_3 $ and $ q_2p_4 $ are edges. Corollary \ref{cor:valid_replacement} guarantees a valid replacement using $ D''=\{b_1,b_3,b_2,b_4\} $, $ n''=4 $, and $ e''\leq 8 $. 

\begin{figure}[h]
\centering
\includegraphics[width=1\linewidth]{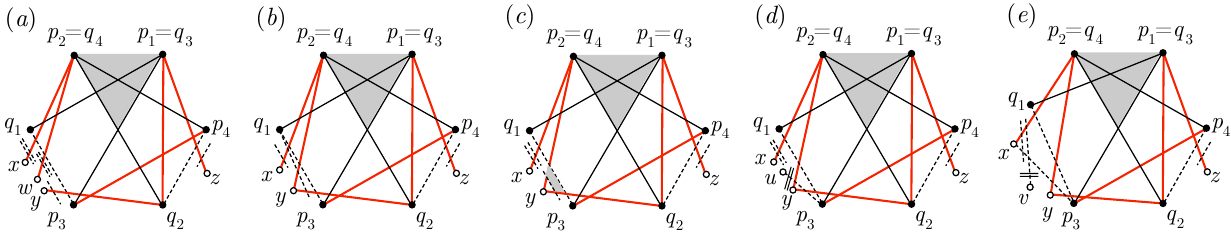}
\caption{Analysis of Figure \ref{fig:Casej4_e}b.}
\label{fig:Casej4_ext2112a}
\end{figure}

In Figure \ref{fig:Casej4_e}b, we can assume that $ q_1p_3 $ and $ q_2p_4 $ are edges by Lemma \ref{obs:boundary}. Let $ w,x,y, $ and $ z $ be the vertices not in $ V $ incident to $ b_1,c_1,b_3 $ and $ b_4 $, respectively. If $x\neq y$ and $w\neq y$, Figure \ref{fig:Casej4_ext2112a}a, then Corollary \ref{cor:valid_replacement} guarantees a valid replacement using $ D''=\{b_1,c_1,b_3,b_4\} $, $ n''=4 $, and $ e''\leq 8 $. Assume that $w=y$ and let $D''=\{b_1,c_1,b_3,b_4\}$ so that $ n''=3 $ and $ e''\leq 6 $. If $e'\leq 5$, then Corollary \ref{cor:valid_replacement} guarantees a valid replacement. So assume that $e'=6$ and so there must be an edge $uu'\neq q_1p_3$ not in $E'$ crossing $c_1$ (and $b_3$) and nonincident to $q_1$ (otherwise $e''\leq 5$, Figure \ref{fig:Casej4_ext2112a}b). If $u$ and $u'$ are not in $V$, Figure \ref{fig:Casej4_ext2112a}c, then there would be a quadrilateral like the one in Figure \ref{fig:Casej4N}b. So assume that $u'=p_3$. If $u\neq x$, Figure \ref{fig:Casej4_ext2112a}d, then $up_3$ can be crossed by at most 2 edges not in $E'$. Let $ V'=V\cup \{u,x,y,z\} $, then $|V'|=10$, $|\gen(V')|\leq 10+8=18$, and thus $V'$ generates a valid replacement. If $u=x$, Figure \ref{fig:Casej4_ext2112a}e, then one of the two edges crossing $b_1$ is incident to a vertex $v$ not in $V\cup\{x,y,z\}$ and such an edge can be crossed by at most two more edges not in $E'$ (because it is already crosses $b_1$ and $xp_3$). Let $ V'=V\cup \{v,x,y,z\} $, then $|V'|=10$, $|\gen(V')|\leq 10+8=18$, and thus $V'$ generates a valid replacement. 

In Figures \ref{fig:Casej4_e}c, we can assume that $ q_1p_3 $ and $ q_2p_4 $ are edges by Lemma \ref{obs:boundary}.  The edges $ b_1 $ and $ b_4 $ are incident to some vertices $ x\notin V $ and $ y\notin V $, respectively. If  $V'=V\cup\{x,y\}$, then $|V'|=8$ and $|\gen(V')|\leq 14$ and so $V'$ generates a valid replacement whenever $14\leq 13+|\nonedge(V')|$, that is, whenever $|\gen(V')|\leq 13$ or $|\nonedge(V')|\geq 1$. So we can assume that $|\gen(V')|= 14$ and $|\nonedge(V')|=0$. Let $ uu' $ and $ vv' $ (Figure \ref{fig:Casej4_ext2112}a) be the other edges crossing $ b_1 $ and $ b_4 $, respectively.

\begin{figure}[h]
\centering
\includegraphics[width=.65\linewidth]{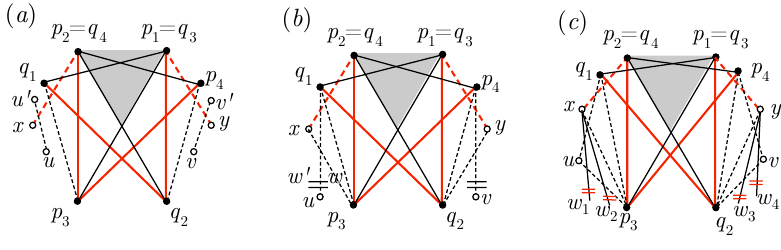}
\caption{Analysis of Figure \ref{fig:Casej4_e}c: extending a cycle 2112.}
\label{fig:Casej4_ext2112}
\end{figure}

If $ u $ and $ u' $ are not in $ V $ (similar argument for $v$ and $v'$), then let $ V_1=V\cup\{x,y,u,u'\} $. Then $ |V_1|=10 $ and $ |\gen(V_1)|\leq 10+2+2+3=17 $ and thus there is a valid replacement generated by $ V_1 $. So assume that $ u'=q_1 $ and $ v'=p_4 $ as in Figure \ref{fig:Casej4_ext2112}b (the argument is analogous if instead $ u=p_3 $ and/or $ v=q_2 $). Since $|\nonedge(V')|=0$, then $ xp_3 $ and $ yq_2 $ are edges. Let $ V_2=V\cup\{x,y,u,v\} $. Then $ |V_2|=10 $ and $ |\gen(V_2)|\leq 10+10=20 $. Hence, $V_2$ generates a valid replacement whenever $20\leq 13+|\nonedge(V_2)|$, that is, if $|\gen(V_2)|\leq 19$ or $|\nonedge(V_2)|\geq 1$. So assume that $|\gen(V_2)|=20$ and $|\nonedge(V_2)|=0$. If one of the edges $ w'w $ crossing $ uq_1 $ or $ vp_4 $ is not incident to any vertices in $ V_2 $, let $ V_3=V_2\cup\{w,w'\} $. Then $ |V_3|=12 $, $ |\gen(V_3)|\leq 20+3=23 $, and thus $ V_3 $ generates a valid replacement. Then (in order to avoid triangles in $ G^{\otimes} $) we can assume that the two extra edges crossing $ uq_1 $ are $ xw_1 $ and $ xw_2 $; and the two extra edges crossing $ vp_4 $ are $ yw_3 $ and $ yw_4 $ as in Figure \ref{fig:Casej4_ext2112}c. (The argument is analogous if these edges are incident to $p_3$ and/or $q_2$ instead of $x$ and $y$, respectively.) Since $|\nonedge(V_2)|=0$, then $ up_3 $ and $ vq_2 $ are edges. Let $ V_4=V_2\cup \{w_1,w_2,w_3,w_4\} $. Then $ |V_4|=14 $ and $ |\gen(V_4)|\leq 20+10=30 $. As long as $ |\gen(V_4)|\leq 29$, there is a valid replacement generated by $ V_4$. So assume that $ |\gen(V_4)|=30 $. This is the only remaining possibility for which we have not yet found a valid replacement in this subcase, see Figure \ref{fig:Casej4_left}b.

\begin{figure}[h]
\centering
\includegraphics[width=.75\linewidth]{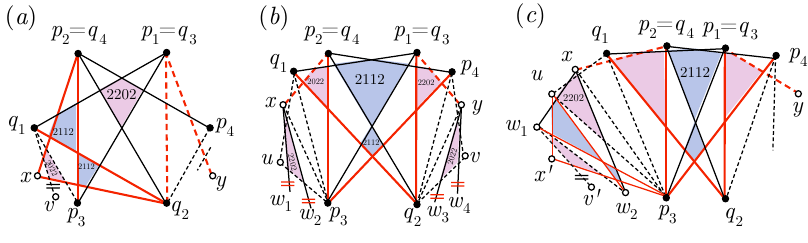}
\caption{(a-b) The only remaining subgraphs in Subcases (2,2,0,2) and (2,1,1,2), respectively. (c) Puting together Subcases (2,2,0,2) and (2,1,1,2).}
\label{fig:Casej4_left}
\end{figure}

Figures \ref{fig:Casej4_left}a-b (and equivalent cases, all of which can be obtained by reflections of one side of some of their diagonals) are all the remaining possibilities for which we have not found valid replacements. Figure \ref{fig:Casej4_left}a corresponds to Subcase (2,2,0,2) and we say that the quadrilateral formed by $ a_1,a_2,a_3, $ and $ a_4 $ is of type 2202. Figure \ref{fig:Casej4_left}b corresponds to Subcase (2,1,1,2) and we say that the quadrilateral formed by $ a_1,a_2,a_3, $ and $ a_4 $ is of type 2112. Note that the existence of a quadrilateral of type 2202 implies the existence of either a valid replacement or a quadrilateral of type 2112, and vice versa. Indeed, the quadrilaterals formed by the sets of edges $\{b_1,c_1,q_1q_2,a_1\} $, $\{a_3,p_2p_3,b_3,c_3\} $, and  $\{q_1v,q_1p_3,xq_2,xp_2\}$ in Figure \ref{fig:Casej4_left}a are of types 2112, 2112, and 2022, respectively; and the quadrilaterals formed by the sets of edges $\{p_3p_4,a_3,a_2,q_2q_1\} $,  $\{b_1,c_1,q_1q_2,a_1\} $,
$\{a_4,p_4p_3,b_4,c_4\} $, $\{xw_1,xw_2,up_3,uq_1\} $, and  $\{vp_4,vq_2,yw_3,yw_4\}$ in Figure \ref{fig:Casej4_left}b are of types 2112, 2022, 2202, 2202, and 2022, respectively.

So assume that there is a quadrilateral of type 2112 as in Figure \ref{fig:Casej4_left}b. Then either there is a valid replacement at some point or its most left quadrilateral of type 2202 can be extended as the main quadrilateral of type 2202 in Figure \ref{fig:Casej4_left}a. This is shown in Figure  \ref{fig:Casej4_left}c, where $ x,u,w_1,x',v',w_2,p_3,p_2, $ and $ q_1 $ in Figure \ref{fig:Casej4_left}c correspond to $ p_1,p_2,q_1,x,v,p_3,q_2,y,$ and $p_4 $ in Figure \ref{fig:Casej4_left}a, respectively. Now Figure \ref{fig:Casej4_left}c includes a new quadrilateral of type 2112, to which the previous argument could be applied. If we repeat the argument two more times, we can guarantee either a valid replacement at some point or the subgraph in Figure \ref{fig:Final22_48} that consists of a set $ V'' $ of 22 vertices whose convex hull interior is crossed by at most 48 edges. Then $ V'' $ generates a valid replacement.


\begin{figure}[h]
\centering
\includegraphics[width=.7\linewidth]{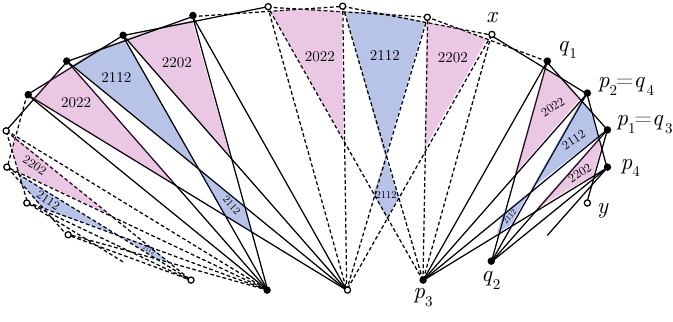}
\caption{A valid replacement with 22 vertices and 48 edges. The graph $ H_{22}=5D_6 $ has 49 edges.}
\label{fig:Final22_48}
\end{figure}
\end{proof}

\noindent {\bf Acknowledgements.} We would like to thank the referees for their valuable suggestions. The presentation was greatly enhanced because of these suggestions.

\end{document}